\documentclass[11pt,reqno]{amsart}

\usepackage{amsmath,amssymb,amsfonts,mathtools}
\usepackage{enumitem}
\usepackage{microtype}
\usepackage{hyperref}

\newcommand{\Span}{\operatorname{span}}
\newcommand{\spec}{\operatorname{spec}}
\newcommand{\diag}{\operatorname{diag}}
\newcommand{\norm}[1]{\left\|#1\right\|}
\newcommand{\ip}[2]{\left\langle #1,#2\right\rangle}

\theoremstyle{plain}
\newtheorem{theorem}{Theorem}[section]
\newtheorem{proposition}[theorem]{Proposition}
\newtheorem{lemma}[theorem]{Lemma}
\newtheorem{corollary}[theorem]{Corollary}

\theoremstyle{remark}
\newtheorem{remark}[theorem]{Remark}

\title[Elliptical width and the Bourin--Lee conjecture]
{An Extremal Formula for Elliptical Width
	and the Bourin--Lee Conjecture}

\author{Mohammad Sababheh}

\address{Department of Mathematics,
	Abdullah Al Salem University, Kuwait. \newline	
	 Department of Basic Sciences, Princess Sumaya University for Technology, Amman, Jordan}

\email{sababheh@yahoo.com}

\subjclass[2020]{15A60, 47A12, 15A42, 47A30}

\keywords{Positive block matrix, numerical range, elliptical width,
	operator norm, compression, essentially Hermitian matrix,
	Bourin--Lee conjecture}

\begin{document}
	
	\begin{abstract}
		Let \(X\in M_n(\mathbb C)\), \(n\geq2\), and let
		\[
		\delta_2(X)
		=
		\sup_{\dim S=2}\delta(X_S),
		\]
		where \(X_S\) denotes the compression of \(X\) to \(S\), and
		\(\delta(X_S)\) is the diameter of the largest disk contained in
		the numerical range \(W(X_S)\). Bourin and Lee proved that every
		positive block matrix
		\[
		M=
		\begin{pmatrix}
			A&X\\
			X^*&B
		\end{pmatrix}
		\]
		satisfies
		\[
		\norm{M}\leq\norm{A+B}+\delta_2(X).
		\]
		In the same work, they conjectured that the inequality
		\[
		\norm{M}\leq\norm{A+B}
		\]
		for every positive block matrix with prescribed off-diagonal block
		\(X\) characterizes essentially Hermitian matrices.
		
		We resolve this conjecture by proving the stronger exact formula
		\[
		\sup_{\left(\begin{smallmatrix}A&X\\X^*&B\end{smallmatrix}\right)\geq0}
		\left\{
		\norm{\begin{pmatrix}A&X\\X^*&B\end{pmatrix}}
		-\norm{A+B}
		\right\}
		=
		\delta_2(X).
		\]
		Thus the elliptical width is precisely the optimal norm defect
		associated with a fixed off-diagonal block. The reverse inequality
		is obtained by lifting an arbitrary two-dimensional compression
		of \(X\) to an explicit family of positive block matrices, yielding
		quantitative two-sided estimates with error of order \(t^{-1}\).
		
		As a consequence, the norm inequality above holds universally
		if and only if \(X\) is essentially Hermitian, proving Conjecture 3.3
		of Bourin and Lee. We also construct an explicit \(3\times3\)
		normal example showing that the radius constant in their
		normal-off-diagonal eigenvalue estimate is optimal at the leading
		eigenvalue level. By direct-sum amplification of this construction,
		we further prove that the same radius constant is sharp at every
		eigenvalue level \(j\geq0\), thereby resolving their corresponding
		sharpness question.
	\end{abstract}
	
	\maketitle
	
	\section{Introduction}
	
	Let
	\[
	M=
	\begin{pmatrix}
		A&X\\
		X^*&B
	\end{pmatrix}\geq0,
	\qquad
	A,B,X\in M_n(\mathbb C).
	\]
	Positive block matrices of this form occur throughout matrix
	analysis and operator theory.  A basic theme is to compare spectral
	or norm quantities associated with the full matrix \(M\) with
	corresponding quantities associated with the partial trace \(A+B\).
	The off-diagonal block \(X\) represents the obstruction to such
	comparisons, and numerical-range geometry provides a natural way of
	measuring this obstruction; see, for example,
	\cite{BourinLeeNumerical,BourinLeeInradius,BourinMhanna,
		GumusLiuRaouafiTam,Hayashi}.
	
	Throughout the paper,
	\[
	\ip{x}{y}=x^*y,
	\]
	so the inner product is linear in its second variable.
	
	For a compact convex set \(\Lambda\subset\mathbb C\), let
	\(\delta(\Lambda)\) denote the diameter of the largest closed disk
	contained in \(\Lambda\).  For \(T\in M_n(\mathbb C)\), write
	\[
	\delta(T)=\delta(W(T)),
	\]
	where
	\[
	W(T)=
	\left\{
	\ip{x}{Tx}:\norm{x}=1
	\right\}
	\]
	is the numerical range of \(T\).
	
	If \(S\subset\mathbb C^n\) is a subspace and \(P_S\) denotes the
	orthogonal projection onto \(S\), then the compression of
	\(T\in M_n(\mathbb C)\) to \(S\) is the operator
	\[
	T_S=P_ST|_S:S\to S.
	\]
	Thus, for \(x\in S\),
	\[
	T_Sx=P_STx.
	\]

Bourin and Lee \cite{BourinLeeInradius} introduced the
\emph{elliptical width} of \(X\in M_n(\mathbb C)\) by
\begin{equation}\label{eq:delta2}
	\delta_2(X)
	=
	\sup_{\dim S=2}\delta(X_S).
\end{equation}
The terminology is motivated by the elliptical range theorem:
for every two-dimensional subspace \(S\subset\mathbb C^n\),
the numerical range \(W(X_S)\) is an elliptical disk, possibly
degenerate, and \(\delta(X_S)\) is exactly the length of its minor
axis.
	
	A principal result of Bourin and Lee
	\cite[Theorem 2.1]{BourinLeeInradius} asserts that, whenever
	\[
	M=
	\begin{pmatrix}
		A&X\\
		X^*&B
	\end{pmatrix}
	\geq0,
	\]
	one has
	\begin{equation}\label{eq:BL-eigenvalue}
		\lambda_{1+2j}(M)
		\leq
		\lambda_{1+j}(A+B)+\delta_2(X),
		\qquad
		j=0,\ldots,n-1.
	\end{equation}
	Here, for a Hermitian matrix \(H\), its eigenvalues are arranged in
	decreasing order,
	\[
	\lambda_1(H)\geq\lambda_2(H)\geq\cdots.
	\]
	
At the operator-norm level, the case \(j=0\) of
\eqref{eq:BL-eigenvalue} yields
\begin{equation}\label{eq:BL-norm}
	\norm{
		\begin{pmatrix}
			A&X\\
			X^*&B
	\end{pmatrix}}
	\leq
	\norm{A+B}+\delta_2(X).
\end{equation}
	
The same work raises a natural rigidity question concerning the case
in which the correction term in \eqref{eq:BL-norm} can be omitted.
Recall that a matrix \(X\in M_n(\mathbb C)\) is called
\emph{essentially Hermitian} if
\[
X=\alpha H+\beta I
\]
for some Hermitian matrix \(H\) and some
\(\alpha,\beta\in\mathbb C\).  In this case, for every subspace
\(S\subset\mathbb C^n\), the compression
\[
X_S=\alpha H_S+\beta I_S
\]
is normal and its numerical range is a line segment.  Hence
\[
\delta_2(X)=0,
\]
and \eqref{eq:BL-norm} reduces to
\begin{equation}\label{eq:zero-inequality}
	\norm{
		\begin{pmatrix}
			A&X\\
			X^*&B
	\end{pmatrix}}
	\leq
	\norm{A+B}.
\end{equation}
	
Motivated in particular by a theorem of Hayashi
\cite{Hayashi}, Bourin and Lee formulated the following conjecture
\cite[Conjecture 3.3]{BourinLeeInradius}:

\medskip

\noindent
\emph{If \eqref{eq:zero-inequality} holds for every positive block
	matrix having \(X\) as its off-diagonal block, then \(X\) is
	essentially Hermitian.}

\medskip

To the best of our knowledge, this conjecture has remained open.
The principal aim of the present paper is to resolve it and, in fact,
to place it within a stronger quantitative framework.
	
Our approach yields a stronger quantitative statement.  For
\(X\in M_n(\mathbb C)\), define the positive-block norm defect by
\begin{equation}\label{eq:Phi}
	\Phi(X)
	=
	\sup
	\left\{
	\norm{
		\begin{pmatrix}
			A&X\\
			X^*&B
	\end{pmatrix}}
	-\norm{A+B}:
	\begin{pmatrix}
		A&X\\
		X^*&B
	\end{pmatrix}\geq0
	\right\}.
\end{equation}
The admissible family is nonempty.  Indeed, if
\(c\geq\norm{X}\), then
\[
\begin{pmatrix}
	cI&X\\
	X^*&cI
\end{pmatrix}\geq0.
\]

Our main theorem gives an exact formula for this quantity:
\begin{equation}\label{eq:intro-main}
	\Phi(X)=\delta_2(X).
\end{equation}
Consequently, the elliptical width appearing in
\eqref{eq:BL-norm} is not merely a valid additive correction term;
it is the optimal correction associated with the prescribed
off-diagonal block \(X\).
	
The estimate
\[
\Phi(X)\leq\delta_2(X)
\]
is precisely the \(j=0\) case of \eqref{eq:BL-eigenvalue}.  The new
content is the reverse inequality.  More precisely, for every
two-dimensional subspace \(S\subset\mathbb C^n\), we construct a
family of positive block matrices
\[
M_t=
\begin{pmatrix}
	A_t&X\\
	X^*&B_t
\end{pmatrix}\geq0
\]
such that
\[
\norm{M_t}-\norm{A_t+B_t}
\longrightarrow
\delta(X_S)
\qquad (t\to\infty).
\]
Thus every two-dimensional compression of \(X\) can be realized
asymptotically through the norm defect of positive block matrices
having the same prescribed off-diagonal block \(X\).  Taking the
supremum over all two-dimensional subspaces \(S\) yields the reverse
inequality
\[
\Phi(X)\geq\delta_2(X).
\]
	
Fix a two-dimensional subspace \(S\subset\mathbb C^n\).  The
construction is based on a Schur form of the compression \(X_S\).
After choosing a suitable orthonormal basis of \(S\), we may write
\[
X_S=
\begin{pmatrix}
	\alpha&z\\
	0&\beta
\end{pmatrix}.
\]
By the elliptical range theorem; see, for example,
\cite{Li,HornJohnson}, the numerical range \(W(X_S)\) is an
elliptical disk, possibly degenerate, with foci \(\alpha\) and
\(\beta\).  Its minor-axis length is
\[
\left(
\operatorname{tr}(X_S^*X_S)-|\alpha|^2-|\beta|^2
\right)^{1/2}
=
|z|.
\]
Therefore
\[
\delta(X_S)=|z|.
\]

The construction proceeds by introducing opposite large Hermitian
perturbations along the two Schur directions.  The upper spectral
edge then captures the contribution of the off-diagonal entry \(z\),
whereas the lower spectral edge is governed by the vanishing reverse
entry in the Schur form.  After shifting by the lower spectral edge,
one obtains a positive block matrix, and elementary quadratic-form
estimates yield an explicit error of order \(t^{-1}\).  This gives
the desired asymptotic realization of \(\delta(X_S)\).

The exact identity \eqref{eq:intro-main} reduces the Bourin--Lee
conjecture to the zero set of the elliptical width.  We prove, in a
self-contained finite-dimensional argument, that
\begin{equation}\label{eq:intro-zero}
	\delta_2(X)=0
	\quad\Longleftrightarrow\quad
	X\ \text{is essentially Hermitian}.
\end{equation}
This characterization is closely related to classical results on
normal two-dimensional compressions; compare Farenick
\cite{Farenick}.  Combining \eqref{eq:intro-main} with
\eqref{eq:intro-zero} yields the resolution of the Bourin--Lee
conjecture.
	
There is also a second open problem in
\cite{BourinLeeInradius}. If \(N\) is normal and
\[
\operatorname{spec}(N)\subset\{z\in\mathbb C:|z-\gamma|\le r\}
\]
for some \(\gamma\in\mathbb C\), then Bourin and Lee proved
\[
\lambda_{1+2j}
\begin{pmatrix}
	A&N\\
	N^*&B
\end{pmatrix}
\leq
\lambda_{1+j}(A+B)+r,
\qquad
j=0,\ldots,n-1.
\]
They asked whether the constant \(r\) is sharp. We first construct an
explicit \(3\times3\) normal matrix together with an asymptotically
extremal family of positive block matrices, which proves sharpness at
the leading eigenvalue level \(j=0\). A direct-sum amplification of
this construction then shows that, for every prescribed \(j\geq0\),
the constant \(r\) is asymptotically attained at the corresponding
eigenvalue level. Thus the radius constant in the Bourin--Lee estimate
is best possible for every \(j\geq0\).
	
The paper is organized as follows. Section 2 collects the
two-dimensional numerical-range facts and the elementary block
estimates used in the lifting argument. Section 3 establishes the
quantitative compression-lifting theorem and the exact identity
\eqref{eq:intro-main}. Section 4 characterizes the zero set of the
elliptical width and resolves Conjecture 3.3 of Bourin and Lee.
Section 5 treats normal off-diagonal blocks and proves that the radius
constant in the Bourin--Lee eigenvalue estimate is sharp at every
eigenvalue level \(j\geq0\).
%%%%%%%%%%%%%%%%%%%%%%%%%%%%%%%%%%%%%%%%%%%%%%%%%%%%%%%%%%%%%%%%%%%%%%%%%%%%%%%%%%%%%%%%%%%%%%%%%%%%%%%%%%%%%%%%%%%%%%%%%%%%%%%%%%%%%%%%%%%%%%%%%%%%%%%%%%%%%%%%%%%%%%%%%%%%%%%%%%%%%%%%%%%%%%%%%%%%%%%%%%
	
\section{Two-dimensional geometry and block estimates}

We begin with the elementary two-dimensional fact underlying the
definition of \(\delta_2\).

\begin{lemma}\label{lem:ellipse}
	Let \(T\in M_2(\mathbb C)\).  If, with respect to some orthonormal
	basis,
	\[
	T=
	\begin{pmatrix}
		\alpha&z\\
		0&\beta
	\end{pmatrix},
	\]
	then
	\[
	\delta(T)=|z|.
	\]
\end{lemma}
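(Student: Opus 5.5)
We reduce the lemma to a direct computation of the inradius of an explicit ellipse, and we do it in a self-contained way rather than quoting the minor-axis formula. Since \(\delta\) depends only on \(W(T)\), we may work in the given orthonormal basis. Conjugating by the diagonal unitary \(\diag(1,e^{i\theta})\) leaves \(W(T)\) unchanged and replaces \(z\) by \(e^{i\theta}z\), so we may assume \(z=|z|\geq0\). Subtracting \(\frac{\alpha+\beta}{2}I\) translates \(W(T)\). Multiplying by a unimodular scalar rotates it while keeping upper-triangularity and \(|z|\). Both operations preserve \(\delta\). We therefore reduce to
\[
T=\begin{pmatrix} c & r\\ 0 & -c\end{pmatrix},\qquad c\geq 0,\ r=|z|\geq0 .
\]
After multiplying by a further unimodular scalar, \(c\) is real and nonnegative.

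Next we compute \(W(T)\) explicitly. Unit vectors may be taken up to a phase as \(x=(\cos\phi,\ e^{i\psi}\sin\phi)\), and then
\[
\ip{x}{Tx}=c\cos2\phi+\tfrac{r}{2}e^{i\psi}\sin2\phi .
\]
As \(\phi\in[0,\pi/2]\) and \(\psi\in[0,2\pi)\) vary, this traces the closed elliptical disk
\[
E=\left\{c\cos u+\tfrac r2\sin u\, e^{i\psi}\right\}.
\]
Writing \(w=\xi+i\eta\), the real part is \(c\cos u+\frac r2\sin u\cos\psi\) and the imaginary part is \(\frac r2\sin u\sin\psi\). The boundary is the ellipse with semi-axis \(\sqrt{c^2+r^2/4}\) along the real axis, foci \(\pm c\), and semi-axis \(r/2\) along the imaginary axis. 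One can verify this directly: \(E\) is the set of \(w\) with \(|w-c|+|w+c|\leq 2\sqrt{c^2+r^2/4}\). Alternatively, one can check that the support function of \(E\) in direction \(e^{i\theta}\) equals \(\lambda_{\max}\) of \(\operatorname{Re}(e^{-i\theta}T)\). This is \(\frac12\sqrt{4c^2\cos^2\theta+r^2}\), which is exactly the support function of the ellipse with semi-axes \(\sqrt{c^2+r^2/4}\) and \(r/2\). The support-function route is cleaner, and it is the one I would use. It avoids parametrization issues and identifies \(W(T)\) as that ellipse because a compact convex set is determined by its support function.

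Finally, the largest disk inside an ellipse with semi-axes \(a\geq b\) has radius \(b\). The disk of radius \(b\) centered at the center fits, since \(h_E(\theta)\geq b\) in every direction. No larger disk fits, because the width of \(E\) in the direction of the minor axis is \(2b\). Any disk inside \(E\) has diameter at most the minimal width, and the minimal width is \(2\min_\theta h_E(\theta)=2b\) by central symmetry. Here \(b=r/2\), so \(\delta(T)=r=|z|\). The degenerate case \(r=0\) gives a segment, whose inradius is \(0\), consistent with this. The main obstacle is the identification of \(W(T)\) with the ellipse, and the support-function computation via the largest eigenvalue of the \(2\times2\) Hermitian part handles it in one line.
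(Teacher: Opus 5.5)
Your proof is correct, and its skeleton is the paper's: identify \(W(T)\) as an elliptical disk whose minor axis has length \(|z|\), then use that the largest disk inside an elliptical disk has diameter equal to the minor axis.

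The difference is that the paper cites the elliptical range theorem in the form ``foci \(\alpha,\beta\), minor axis \((\operatorname{tr}(T^*T)-|\alpha|^2-|\beta|^2)^{1/2}\)'' and states the inscribed-disk fact without proof. You instead normalize to \(\begin{pmatrix} c & r\\ 0 & -c\end{pmatrix}\) and verify the ellipse directly from \(h_{W(T)}(\theta)=\lambda_{\max}(\operatorname{Re}(e^{-i\theta}T))=\sqrt{c^2\cos^2\theta+r^2/4}\). You also prove the inscribed-disk fact via support functions and minimal width.

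The paper's route is shorter, and its trace formula works for any \(2\times 2\) matrix without first putting it in normalized triangular form. Yours is self-contained and actually justifies the step the paper only asserts.

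Two small points to tidy up:
\begin{enumerate}
\item Multiplying by a unimodular scalar to make \(c\geq 0\) changes the phase of the \((1,2)\) entry. Do the conjugation by \(\diag(1,e^{i\theta})\) last.
\item Equality of support functions only identifies the convex hull of \(W(T)\) with \(E\). Concluding \(W(T)=E\) uses the Toeplitz--Hausdorff theorem, which the paper's definition of \(\delta\) on compact convex sets already presupposes.
\end{enumerate}
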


\begin{proof}
	By the elliptical range theorem, \(W(T)\) is an elliptical disk,
	possibly degenerate, with foci \(\alpha\) and \(\beta\); see, for
	example, \cite{HornJohnson,Li}.  Its minor-axis length is
	\[
	\left(
	\operatorname{tr}(T^*T)-|\alpha|^2-|\beta|^2
	\right)^{1/2}.
	\]
	Since
	\[
	\operatorname{tr}(T^*T)
	=
	|\alpha|^2+|\beta|^2+|z|^2,
	\]
	the minor-axis length is \(|z|\).  The largest disk contained in an
	elliptical disk has diameter equal to its minor-axis length.
	Therefore
	\[
	\delta(T)=|z|.
	\]
\end{proof}
	
The next two elementary estimates control the upper and lower
spectral edges of the block matrices used in the lifting argument.

\begin{lemma}\label{lem:block-upper}
	Let \(E\) be a nonzero subspace of \(\mathbb C^m\), so that
	\[
	\mathbb C^m=E\oplus E^\perp.
	\]
	Let
	\[
	Q=
	\begin{pmatrix}
		tI_E+A&C\\
		C^*&R
	\end{pmatrix}
	\]
	be a Hermitian matrix written in block form relative to this
	orthogonal decomposition, where \(A\) and \(R\) are Hermitian
	operators on \(E\) and \(E^\perp\), respectively, and
	\(C:E^\perp\to E\).  Suppose that \(M\geq0\),
	\[
	\lambda_{\max}(A)=a,\qquad
	\norm{C}\leq M,\qquad
	R\leq M I_{E^\perp},
	\]
	and
	\[
	t+a>M.
	\]
	Then
	\begin{equation}\label{eq:block-upper}
		t+a
		\leq
		\lambda_{\max}(Q)
		\leq
		t+a+\frac{M^2}{t+a-M}.
	\end{equation}
\end{lemma}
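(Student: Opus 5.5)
The plan is to obtain the lower bound by testing the Rayleigh quotient on vectors in \(E\), and the upper bound by a quadratic-form estimate that splits a unit vector into its \(E\) and \(E^\perp\) components and then maximizes a two-variable expression.

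\emph{Lower bound.} Let \(u\in E\) be a unit eigenvector of \(A\) for its largest eigenvalue \(a\). Regarding \(u\oplus 0\) as a unit vector in \(\mathbb C^m\), we get
\[
\ip{u\oplus 0}{Q(u\oplus 0)}=t+\ip{u}{Au}=t+a .
\]
Hence \(\lambda_{\max}(Q)\geq t+a\).

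\emph{Upper bound.} Take a unit vector \(x=u\oplus v\) with \(u\in E\), \(v\in E^\perp\), and set \(p=\norm{u}\), \(q=\norm{v}\), so \(p^2+q^2=1\). Using \(A\leq aI_E\), \(\norm{C}\leq M\) and \(R\leq MI_{E^\perp}\), I will bound
\[
\ip{x}{Qx}=t\norm{u}^2+\ip{u}{Au}+2\operatorname{Re}\ip{u}{Cv}+\ip{v}{Rv}
\leq (t+a)p^2+2Mpq+Mq^2 .
\]
The right-hand side is the quadratic form of the real symmetric \(2\times2\) matrix
\[
\begin{pmatrix} t+a & M\\ M & M\end{pmatrix}
\]
evaluated at the unit vector \((p,q)\). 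It is therefore at most the larger eigenvalue \(\mu\) of this matrix, and so \(\lambda_{\max}(Q)\leq\mu\). When \(E^\perp=0\) the statement is trivial, and the estimate covers that case with \(q=0\).

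\emph{Bounding \(\mu\).} Write \(s=t+a\) and \(d=s-M>0\). The eigenvalue \(\mu\) satisfies \((\mu-s)(\mu-M)=M^2\) and \(\mu\geq s\). Put \(\mu=s+\varepsilon\) with \(\varepsilon\geq0\). Then
\[
\varepsilon(\varepsilon+d)=M^2,
\]
so \(\varepsilon\leq M^2/d\), which is the claimed bound
\[
\lambda_{\max}(Q)\leq t+a+\frac{M^2}{t+a-M}.
\]

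The only point that needs care is this last step: bounding the larger root of the \(2\times2\) characteristic equation cleanly, without expanding a square root. The substitution \(\mu=s+\varepsilon\) is chosen for exactly that reason, and the hypothesis \(t+a>M\) is what guarantees \(d>0\).
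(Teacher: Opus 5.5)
Your proof is correct and follows the paper's argument: the same compression lower bound, and the same reduction of \(\ip{x}{Qx}\) to the two-variable form \((t+a)p^2+2Mpq+Mq^2\). The only difference is cosmetic. In the last step the paper checks directly that \(\begin{pmatrix}\varepsilon&-M\\-M&t+a+\varepsilon-M\end{pmatrix}\geq0\) for \(\varepsilon=M^2/(t+a-M)\), which is exactly the statement that your top eigenvalue \(\mu\) is at most \(t+a+\varepsilon\).
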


\begin{proof}
	The compression of \(Q\) to \(E\) is \(tI_E+A\).  Hence, by the
	variational characterization of the largest eigenvalue,
	\[
	\lambda_{\max}(Q)
	\geq
	\lambda_{\max}(tI_E+A)
	=
	t+a.
	\]
	
	For the reverse estimate, let \(x\in E\) and \(y\in E^\perp\).
	Using
	\[
	\ip{x}{Ax}\leq a\norm{x}^2,\qquad
	\bigl|\ip{x}{Cy}\bigr|
	\leq\norm{C}\norm{x}\norm{y}
	\leq M\norm{x}\norm{y},
	\]
	and
	\[
	\ip{y}{Ry}\leq M\norm{y}^2,
	\]
	we obtain
	\[
	\ip{\binom{x}{y}}{Q\binom{x}{y}}
	\leq
	(t+a)\norm{x}^2
	+2M\norm{x}\norm{y}
	+M\norm{y}^2.
	\]
	Set
	\[
	\varepsilon
	=
	\frac{M^2}{t+a-M}.
	\]
	Since \(t+a>M\), we have \(\varepsilon\geq0\), and
	\[
	\varepsilon\bigl(t+a+\varepsilon-M\bigr)-M^2
	=
	\varepsilon^2\geq0.
	\]
	Therefore
	\[
	\begin{pmatrix}
		\varepsilon&-M\\
		-M&t+a+\varepsilon-M
	\end{pmatrix}
	\geq0,
	\]
	and consequently
	\[
	2M\norm{x}\norm{y}
	\leq
	\varepsilon\norm{x}^2
	+
	\bigl(t+a+\varepsilon-M\bigr)\norm{y}^2.
	\]
	Substituting this estimate into the preceding quadratic-form bound
	gives
	\[
	\ip{\binom{x}{y}}{Q\binom{x}{y}}
	\leq
	(t+a+\varepsilon)
	\left(\norm{x}^2+\norm{y}^2\right).
	\]
	Taking the supremum over all unit vectors
	\(\binom{x}{y}\in E\oplus E^\perp\) yields
	\[
	\lambda_{\max}(Q)
	\leq
	t+a+\varepsilon
	=
	t+a+\frac{M^2}{t+a-M}.
	\]
	This proves \eqref{eq:block-upper}.
\end{proof}

\begin{lemma}\label{lem:block-lower}
	Let \(E\) be a nonzero subspace of \(\mathbb C^m\), so that
	\[
	\mathbb C^m=E\oplus E^\perp.
	\]
	Let
	\[
	Q=
	\begin{pmatrix}
		-tI_E&C\\
		C^*&R
	\end{pmatrix}
	\]
	be a Hermitian matrix written in block form relative to this
	orthogonal decomposition, where \(R\) is Hermitian on \(E^\perp\)
	and \(C:E^\perp\to E\).  Suppose that \(M\geq0\),
	\[
	\norm{C}\leq M,\qquad
	R\geq -M I_{E^\perp},
	\]
	and
	\[
	t>M.
	\]
	Then
	\begin{equation}\label{eq:block-lower}
		-t-\frac{M^2}{t-M}
		\leq
		\lambda_{\min}(Q)
		\leq
		-t.
	\end{equation}
\end{lemma}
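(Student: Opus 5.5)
The plan is to deduce Lemma~\ref{lem:block-lower} from Lemma~\ref{lem:block-upper} by passing to \(-Q\). Alternatively, the same quadratic-form argument can be repeated with all signs reversed.

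First, the upper bound \(\lambda_{\min}(Q)\leq -t\) follows from the variational characterization of the smallest eigenvalue. The compression of \(Q\) to the nonzero subspace \(E\) is \(-tI_E\), so
\[
\lambda_{\min}(Q)\leq \lambda_{\min}(-tI_E)=-t.
\]

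For the lower bound, consider
\[
-Q=\begin{pmatrix} tI_E+0 & -C\\ -C^* & -R\end{pmatrix}.
\]
This has exactly the block shape of Lemma~\ref{lem:block-upper}, with the following data:
\begin{itemize}
\item the \(E\)-block perturbation is \(A=0\), so \(a=\lambda_{\max}(A)=0\);
\item the off-diagonal block is \(-C\), and \(\norm{-C}=\norm{C}\leq M\);
\item the \(E^\perp\)-block is \(-R\), and the hypothesis \(R\geq -MI_{E^\perp}\) is equivalent to \(-R\leq MI_{E^\perp}\).
\end{itemize}
The condition \(t+a>M\) becomes \(t>M\), which is assumed. Lemma~\ref{lem:block-upper} then gives
\[
\lambda_{\max}(-Q)\leq t+\frac{M^2}{t-M}.
\]
Since \(\lambda_{\min}(Q)=-\lambda_{\max}(-Q)\), this yields
\[
\lambda_{\min}(Q)\geq -t-\frac{M^2}{t-M}.
\]
Combined with the first step, this is \eqref{eq:block-lower}.

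There is no genuine obstacle. The only point needing care is checking that the hypotheses transfer correctly under negation: the adjoint of the off-diagonal block is \(-C^*\), consistent with \(-Q\) being Hermitian, and the inequality on \(R\) flips to the required form.

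If a self-contained argument is preferred, one bounds the quadratic form from below for \(x\in E\), \(y\in E^\perp\):
\[
\ip{\binom{x}{y}}{Q\binom{x}{y}}\geq -t\norm{x}^2-2M\norm{x}\norm{y}-M\norm{y}^2.
\]
One then absorbs the cross term using the same \(2\times2\) positivity with \(\varepsilon=M^2/(t-M)\), exactly as in the proof of Lemma~\ref{lem:block-upper}.
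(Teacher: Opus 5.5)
Your proof is correct. Your main route reduces the statement to Lemma~\ref{lem:block-upper}: you apply it to $-Q$ with $A=0$, $a=0$, off-diagonal block $-C$ and corner block $-R\leq MI_{E^\perp}$, and all the hypotheses transfer as you checked. The paper does not make this reduction. Instead it reruns the quadratic-form argument directly, with the inequalities reversed: it bounds the form from below, sets $\eta=M^2/(t-M)$, and uses the same $2\times2$ positivity. That is exactly your fallback argument.

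Since the paper's direct proof is the proof of Lemma~\ref{lem:block-upper} with every sign flipped, the two routes are mathematically equivalent and give the same constant. Your reduction is more economical, and it makes visible that Lemma~\ref{lem:block-lower} is just the case $A=0$ of Lemma~\ref{lem:block-upper} under $Q\mapsto -Q$. The paper's version gains only self-containment.
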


\begin{proof}
	The compression of \(Q\) to \(E\) is \(-tI_E\).  Hence, by the
	variational characterization of the smallest eigenvalue,
	\[
	\lambda_{\min}(Q)
	\leq
	-t.
	\]
	
	For the reverse estimate, let \(x\in E\) and \(y\in E^\perp\).
	Using
	\[
	2\Re\ip{x}{Cy}
	\geq
	-2\bigl|\ip{x}{Cy}\bigr|
	\geq
	-2M\norm{x}\norm{y}
	\]
	and
	\[
	\ip{y}{Ry}\geq -M\norm{y}^2,
	\]
	we obtain
	\[
	\ip{\binom{x}{y}}{Q\binom{x}{y}}
	\geq
	-t\norm{x}^2
	-2M\norm{x}\norm{y}
	-M\norm{y}^2.
	\]
	
	Set
	\[
	\eta=\frac{M^2}{t-M}.
	\]
	Since \(t>M\), we have \(\eta\geq0\), and
	\[
	\eta(t+\eta-M)-M^2
	=
	\eta^2\geq0.
	\]
	Therefore
	\[
	\begin{pmatrix}
		\eta&-M\\
		-M&t+\eta-M
	\end{pmatrix}
	\geq0,
	\]
	and consequently
	\[
	2M\norm{x}\norm{y}
	\leq
	\eta\norm{x}^2
	+
	(t+\eta-M)\norm{y}^2.
	\]
	Substituting this estimate into the preceding quadratic-form bound
	gives
	\[
	\ip{\binom{x}{y}}{Q\binom{x}{y}}
	\geq
	-(t+\eta)
	\left(\norm{x}^2+\norm{y}^2\right).
	\]
	Taking the infimum over all unit vectors
	\(\binom{x}{y}\in E\oplus E^\perp\) yields
	\[
	\lambda_{\min}(Q)
	\geq
	-t-\eta
	=
	-t-\frac{M^2}{t-M}.
	\]
	This proves \eqref{eq:block-lower}.
\end{proof}
	
\section{The exact norm-defect formula}

We now prove the main lifting result.

\begin{theorem}[Quantitative compression lifting]
	\label{thm:lifting}
	Let \(X\in M_n(\mathbb C)\), let
	\(S\subset\mathbb C^n\) be a two-dimensional subspace, and set
	\[
	m=\norm{X}.
	\]
	For every \(t>m\), there exist positive semidefinite matrices
	\(A_t,B_t\in M_n(\mathbb C)\) such that
	\[
	M_t=
	\begin{pmatrix}
		A_t&X\\
		X^*&B_t
	\end{pmatrix}
	\geq0
	\]
	and
	\begin{equation}\label{eq:quantitative-lifting}
		\delta(X_S)-\frac{m^2}{t-m}
		\leq
		\norm{M_t}-\norm{A_t+B_t}
		\leq
		\delta(X_S)
		+
		\frac{m^2}{t+\delta(X_S)-m}.
	\end{equation}
	Consequently,
	\begin{equation}\label{eq:lifting-limit}
		\norm{M_t}-\norm{A_t+B_t}
		\longrightarrow
		\delta(X_S)
		\qquad (t\to\infty).
	\end{equation}
\end{theorem}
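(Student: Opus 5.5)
The plan is to build \(M_t\) as a Hermitian matrix with prescribed off-diagonal block \(X\), shifted by its smallest eigenvalue. Fix the two-dimensional subspace \(S\). Choose an orthonormal Schur basis \(e_1,e_2\) of \(S\) in which
\[
X_S=\begin{pmatrix}\alpha&z\\0&\beta\end{pmatrix},
\qquad\text{that is,}\qquad
\ip{e_1}{Xe_2}=z,\quad \ip{e_2}{Xe_1}=0.
\]
By Lemma~\ref{lem:ellipse}, \(\delta:=\delta(X_S)=|z|\). Set \(P_k=e_ke_k^*\) and define the Hermitian matrix
\[
H_t=\begin{pmatrix} t(P_1-P_2)&X\\ X^*& t(P_2-P_1)\end{pmatrix}.
\]
These are the opposite perturbations along the two Schur directions, and the two diagonal blocks sum to zero. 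Then put
\[
M_t=H_t-\lambda_{\min}(H_t)I,\qquad A_t=t(P_1-P_2)-\lambda_{\min}(H_t)I,\qquad B_t=t(P_2-P_1)-\lambda_{\min}(H_t)I.
\]
Then \(M_t\geq0\), and
\[
\norm{M_t}=\lambda_{\max}(H_t)-\lambda_{\min}(H_t),\qquad A_t+B_t=-2\lambda_{\min}(H_t)I.
\]
Once we know \(\lambda_{\min}(H_t)\leq -t<0\), we also get \(A_t,B_t\geq -tI+tI=0\) and \(\norm{A_t+B_t}=-2\lambda_{\min}(H_t)\). Hence the defect is exactly
\[
\norm{M_t}-\norm{A_t+B_t}=\lambda_{\max}(H_t)+\lambda_{\min}(H_t),
\]
and everything reduces to two-sided estimates of the two spectral edges of \(H_t\).

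For the upper edge, apply Lemma~\ref{lem:block-upper} with \(E=\Span\{(e_1,0),(0,e_2)\}\subset\mathbb C^{2n}\) and \(M=m\). The compression of \(H_t\) to \(E\) is \(tI_E+A\) with
\[
A=\begin{pmatrix}0&z\\ \bar z&0\end{pmatrix},
\]
so \(a=\lambda_{\max}(A)=|z|=\delta\). The coupling \(C\) is a compression of \(\begin{pmatrix}0&X\\X^*&0\end{pmatrix}\), whose norm is \(m\), so \(\norm{C}\leq m\). The block \(R\) on \(E^\perp\) is the compression of that same off-diagonal matrix plus the negative-semidefinite terms \(-tP_2\oplus -tP_1\), because the \(+t\) parts live entirely on \(E\); hence \(R\leq mI\). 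Since \(t>m\) we have \(t+\delta>m\), and the lemma gives
\[
t+\delta\leq\lambda_{\max}(H_t)\leq t+\delta+\frac{m^2}{t+\delta-m}.
\]

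For the lower edge, apply Lemma~\ref{lem:block-lower} with \(F=\Span\{(e_2,0),(0,e_1)\}\). The compression of \(H_t\) to \(F\) is exactly \(-tI_F\): the only cross term is \(\ip{e_2}{Xe_1}=0\), which is precisely where the vanishing reverse Schur entry is used. On \(F^\perp\) the \(t\)-terms are positive semidefinite, so \(R\geq -mI\), and again \(\norm{C}\leq m\). This gives
\[
-t-\frac{m^2}{t-m}\leq\lambda_{\min}(H_t)\leq -t.
\]

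Adding the two edge estimates yields \eqref{eq:quantitative-lifting}, and letting \(t\to\infty\) gives \eqref{eq:lifting-limit}. The main obstacle is choosing the construction so that the norm of \(A_t+B_t\) is controlled exactly by the lower edge; this is why the diagonal perturbations are arranged to cancel in \(A_t+B_t\). After that, the only delicate point is verifying the hypotheses \(R\leq mI\) and \(R\geq -mI\) on the orthogonal complements, which holds because each large perturbation has the favourable sign off the chosen subspace.
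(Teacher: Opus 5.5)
Your proposal is correct and is essentially the paper's proof. Your \(H_t\) is the paper's \(Q_t\) with \(D=P_1-P_2\). You shift by \(-\lambda_{\min}\), apply Lemmas~\ref{lem:block-upper} and~\ref{lem:block-lower} on the same two eigenspaces of the perturbation, and use the same exact identity: the defect equals \(\lambda_{\max}(H_t)+\lambda_{\min}(H_t)\).
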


\begin{proof}
	By the Schur triangularization theorem, there is an orthonormal
	basis \(e_1,e_2\) of \(S\) such that
	\begin{equation}\label{eq:SchurXS}
		X_S=
		\begin{pmatrix}
			\alpha&z\\
			0&\beta
		\end{pmatrix}.
	\end{equation}
	In particular,
	\begin{equation}\label{eq:SchurEntries}
		e_1^*Xe_2=z,
		\qquad
		e_2^*Xe_1=0.
	\end{equation}
	By Lemma \ref{lem:ellipse},
	\begin{equation}\label{eq:delta-z}
		\delta(X_S)=|z|.
	\end{equation}
	
	Define
	\[
	D=e_1e_1^*-e_2e_2^*,
	\]
	where \(D\) acts as zero on \(S^\perp\), and set
	\begin{equation}\label{eq:Qt}
		Q_t=
		\begin{pmatrix}
			tD&X\\
			X^*&-tD
		\end{pmatrix}.
	\end{equation}
	Write
	\[
	Q_t=tK+L,
	\]
	where
	\[
	K=
	\begin{pmatrix}
		D&0\\
		0&-D
	\end{pmatrix},
	\qquad
	L=
	\begin{pmatrix}
		0&X\\
		X^*&0
	\end{pmatrix}.
	\]
	Since
	\[
	L^2=
	\begin{pmatrix}
		XX^*&0\\
		0&X^*X
	\end{pmatrix},
	\]
	we have
	\begin{equation}\label{eq:Lnorm}
		\norm{L}=\norm{X}=m.
	\end{equation}
	
	Consider first the \(+1\) eigenspace of \(K\),
	\[
	E_+
	=
	\Span\left\{
	\binom{e_1}{0},
	\binom{0}{e_2}
	\right\}.
	\]
	The compression of \(L\) to \(E_+\) is
	\[
	L_+
	=
	\begin{pmatrix}
		0&z\\
		\overline z&0
	\end{pmatrix},
	\]
	and therefore
	\[
	\lambda_{\max}(L_+)=|z|.
	\]
	Relative to the orthogonal decomposition
	\[
	\mathbb C^{2n}=E_+\oplus E_+^\perp,
	\]
	write
	\[
	Q_t=
	\begin{pmatrix}
		tI_{E_+}+L_+&C_+\\
		C_+^*&R_+
	\end{pmatrix}.
	\]
	On \(E_+^\perp\), the eigenvalues of \(K\) belong to
	\(\{-1,0\}\).  Hence
	\[
	tK|_{E_+^\perp}\leq0.
	\]
	Moreover, \eqref{eq:Lnorm} implies
	\[
	-mI_{2n}\leq L\leq mI_{2n}.
	\]
	It follows that
	\[
	R_+\leq mI_{E_+^\perp},
	\qquad
	\norm{C_+}\leq m.
	\]
	Since \(t>m\), we also have \(t+|z|>m\).  Lemma
	\ref{lem:block-upper}, applied with \(A=L_+\), \(a=|z|\), and
	\(M=m\), therefore gives
	\begin{equation}\label{eq:lmax-estimate}
		t+|z|
		\leq
		\lambda_{\max}(Q_t)
		\leq
		t+|z|+\frac{m^2}{t+|z|-m}.
	\end{equation}
	
	Next consider the \(-1\) eigenspace of \(K\),
	\[
	E_-
	=
	\Span\left\{
	\binom{e_2}{0},
	\binom{0}{e_1}
	\right\}.
	\]
	By \eqref{eq:SchurEntries}, the compression of \(L\) to \(E_-\)
	is zero:
	\[
	L_-
	=
	\begin{pmatrix}
		0&e_2^*Xe_1\\
		e_1^*X^*e_2&0
	\end{pmatrix}
	=
	0.
	\]
	Thus, relative to
	\[
	\mathbb C^{2n}=E_-\oplus E_-^\perp,
	\]
	we may write
	\[
	Q_t=
	\begin{pmatrix}
		-tI_{E_-}&C_-\\
		C_-^*&R_-
	\end{pmatrix}.
	\]
	On \(E_-^\perp\), the eigenvalues of \(K\) belong to
	\(\{0,1\}\), and hence
	\[
	tK|_{E_-^\perp}\geq0.
	\]
	Using again
	\[
	-mI_{2n}\leq L\leq mI_{2n},
	\]
	we obtain
	\[
	R_-\geq -mI_{E_-^\perp},
	\qquad
	\norm{C_-}\leq m.
	\]
	Lemma \ref{lem:block-lower} therefore yields
	\begin{equation}\label{eq:lmin-estimate}
		-t-\frac{m^2}{t-m}
		\leq
		\lambda_{\min}(Q_t)
		\leq
		-t.
	\end{equation}
	
	Set
	\begin{equation}\label{eq:rt}
		r_t=-\lambda_{\min}(Q_t).
	\end{equation}
	By \eqref{eq:lmin-estimate},
	\[
	r_t\geq t.
	\]
	Define
	\begin{equation}\label{eq:AtBt}
		A_t=r_tI_n+tD,
		\qquad
		B_t=r_tI_n-tD.
	\end{equation}
	Since the spectrum of \(D\) is contained in
	\(\{-1,0,1\}\) and \(r_t\geq t\), we have
	\[
	A_t\geq0,
	\qquad
	B_t\geq0.
	\]
	Furthermore,
	\[
	M_t
	=
	\begin{pmatrix}
		A_t&X\\
		X^*&B_t
	\end{pmatrix}
	=
	r_tI_{2n}+Q_t.
	\]
	By the definition of \(r_t\),
	\[
	\lambda_{\min}(M_t)
	=
	r_t+\lambda_{\min}(Q_t)
	=
	0,
	\]
	and hence
	\[
	M_t\geq0.
	\]
	
	Since
	\[
	A_t+B_t=2r_tI_n,
	\]
	we have
	\[
	\norm{A_t+B_t}=2r_t.
	\]
	Also, since \(M_t\geq0\),
	\[
	\norm{M_t}
	=
	\lambda_{\max}(M_t)
	=
	r_t+\lambda_{\max}(Q_t).
	\]
	Using \(r_t=-\lambda_{\min}(Q_t)\), we obtain the exact identity
	\begin{equation}\label{eq:defect-sum}
		\norm{M_t}-\norm{A_t+B_t}
		=
		\lambda_{\max}(Q_t)
		+
		\lambda_{\min}(Q_t).
	\end{equation}
	
	Combining the lower bounds in
	\eqref{eq:lmax-estimate} and \eqref{eq:lmin-estimate} gives
	\[
	\norm{M_t}-\norm{A_t+B_t}
	\geq
	|z|-\frac{m^2}{t-m},
	\]
	while combining the corresponding upper bounds gives
	\[
	\norm{M_t}-\norm{A_t+B_t}
	\leq
	|z|
	+
	\frac{m^2}{t+|z|-m}.
	\]
	In view of \eqref{eq:delta-z}, this proves
	\eqref{eq:quantitative-lifting}.  Since both error terms tend to
	zero as \(t\to\infty\), \eqref{eq:lifting-limit} follows.
\end{proof}
We can now identify the optimal defect exactly.

\begin{theorem}[Exact norm-defect formula]\label{thm:main}
	For every \(X\in M_n(\mathbb C)\), \(n\geq2\),
	\begin{equation}\label{eq:main}
		\Phi(X)=\delta_2(X).
	\end{equation}
	Equivalently,
	\begin{equation}\label{eq:main-expanded}
		\sup
		\left\{
		\norm{
			\begin{pmatrix}
				A&X\\
				X^*&B
		\end{pmatrix}}
		-\norm{A+B}:
		\begin{pmatrix}
			A&X\\
			X^*&B
		\end{pmatrix}\geq0
		\right\}
		=
		\delta_2(X).
	\end{equation}
\end{theorem}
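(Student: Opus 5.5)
The identity \eqref{eq:main} will follow from two inequalities, one for each direction.

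For the upper bound \(\Phi(X)\leq\delta_2(X)\), I would invoke the Bourin--Lee norm estimate \eqref{eq:BL-norm}, the case \(j=0\) of \eqref{eq:BL-eigenvalue}. It says that every admissible positive block matrix \(M\) with off-diagonal block \(X\) satisfies \(\norm{M}-\norm{A+B}\leq\delta_2(X)\). The admissible family is nonempty; for example \(A=B=cI\) with \(c\geq\norm{X}\) works. So taking the supremum over that family gives \(\Phi(X)\leq\delta_2(X)\), and in particular \(\Phi(X)\) is finite.

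For the lower bound \(\Phi(X)\geq\delta_2(X)\), fix an arbitrary two-dimensional subspace \(S\subset\mathbb C^n\); one exists because \(n\geq2\). Theorem \ref{thm:lifting} then provides, for each \(t>\norm{X}\), positive semidefinite \(A_t,B_t\) such that \(M_t\geq0\). These matrices are admissible in \eqref{eq:Phi}, so
\[
\Phi(X)\geq\norm{M_t}-\norm{A_t+B_t}\geq\delta(X_S)-\frac{m^2}{t-m}.
\]
Letting \(t\to\infty\) gives \(\Phi(X)\geq\delta(X_S)\). Taking the supremum over all two-dimensional \(S\), as in \eqref{eq:delta2}, yields \(\Phi(X)\geq\delta_2(X)\).

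There is no real obstacle left, since the substantive work is already contained in Theorem \ref{thm:lifting} and in the cited Bourin--Lee inequality. The only points needing care are that \(\delta_2(X)\) is a finite supremum (it is at most \(2\norm{X}\), so no issue arises) and that the supremum in \(\Phi\) is taken over a nonempty set. The expanded form \eqref{eq:main-expanded} is simply \eqref{eq:main} with the definition \eqref{eq:Phi} of \(\Phi\) written out.
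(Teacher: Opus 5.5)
Your proposal is correct and follows the paper's proof essentially step for step: the upper bound is the \(j=0\) case of the Bourin--Lee inequality, and the lower bound comes from Theorem~\ref{thm:lifting} by letting \(t\to\infty\) and then taking the supremum over all two-dimensional \(S\). Using the explicit estimate \(\delta(X_S)-m^2/(t-m)\) with \(m=\norm{X}\), rather than the limit statement, is an immaterial variation.
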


\begin{proof}
	The case \(j=0\) of \eqref{eq:BL-eigenvalue} yields
	\[
	\norm{
		\begin{pmatrix}
			A&X\\
			X^*&B
	\end{pmatrix}}
	-\norm{A+B}
	\leq
	\delta_2(X)
	\]
	for every pair \(A,B\) for which
	\[
	\begin{pmatrix}
		A&X\\
		X^*&B
	\end{pmatrix}\geq0.
	\]
	Taking the supremum over all such pairs gives
	\[
	\Phi(X)\leq\delta_2(X).
	\]
	
	For the reverse inequality, fix any two-dimensional subspace
	\(S\subset\mathbb C^n\).  By Theorem \ref{thm:lifting}, there exists
	for every \(t>\norm{X}\) an admissible positive block matrix
	\[
	M_t=
	\begin{pmatrix}
		A_t&X\\
		X^*&B_t
	\end{pmatrix}
	\]
	such that
	\[
	\norm{M_t}-\norm{A_t+B_t}
	\longrightarrow
	\delta(X_S)
	\qquad (t\to\infty).
	\]
	Since each \(M_t\) is admissible in the definition of \(\Phi(X)\),
	\[
	\Phi(X)
	\geq
	\norm{M_t}-\norm{A_t+B_t}
	\]
	for every \(t>\norm{X}\).  Letting \(t\to\infty\) therefore gives
	\[
	\Phi(X)\geq\delta(X_S).
	\]
	As \(S\) was arbitrary,
	\[
	\Phi(X)
	\geq
	\sup_{\dim S=2}\delta(X_S)
	=
	\delta_2(X).
	\]
	Combining the two inequalities proves \eqref{eq:main}.
\end{proof}
	
\begin{corollary}[Optimality]\label{cor:optimality}
	For a fixed \(X\in M_n(\mathbb C)\), \(n\geq2\), the quantity
	\(\delta_2(X)\) is the smallest constant \(c\geq0\) such that
	\[
	\norm{
		\begin{pmatrix}
			A&X\\
			X^*&B
	\end{pmatrix}}
	\leq
	\norm{A+B}+c
	\]
	for every \(A,B\in M_n(\mathbb C)\) satisfying
	\[
	\begin{pmatrix}
		A&X\\
		X^*&B
	\end{pmatrix}\geq0.
	\]
\end{corollary}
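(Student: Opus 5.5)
The corollary is a direct reformulation of Theorem~\ref{thm:main}. The plan is to show two things: that $c=\delta_2(X)$ is admissible, and that no smaller nonnegative constant is.

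\textbf{Admissibility.} Let $A,B$ be any matrices with $\left(\begin{smallmatrix}A&X\\X^*&B\end{smallmatrix}\right)\geq0$. By the definition \eqref{eq:Phi} of $\Phi(X)$ and by Theorem~\ref{thm:main},
\[
\norm{\begin{pmatrix}A&X\\X^*&B\end{pmatrix}}-\norm{A+B}\leq\Phi(X)=\delta_2(X).
\]
This is exactly the inequality with $c=\delta_2(X)$, which is nonnegative as a supremum of diameters. Equivalently, this is the Bourin--Lee bound \eqref{eq:BL-norm}.

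\textbf{Minimality.} Suppose $c\geq0$ makes the inequality hold for every admissible pair. Then every norm defect is at most $c$, so taking the supremum gives $\Phi(X)\leq c$. By Theorem~\ref{thm:main}, this means $\delta_2(X)\leq c$.

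\textbf{Conclusion.} The set of admissible constants is therefore exactly the interval $[\delta_2(X),\infty)$, whose least element is $\delta_2(X)$. For a quantitative form of the lower bound, one can bypass the supremum by invoking Theorem~\ref{thm:lifting} directly. Given $c<\delta_2(X)$, choose a two-dimensional subspace $S$ with $\delta(X_S)>c$. Then take $t$ large enough that $\delta(X_S)-m^2/(t-m)>c$. The matrix $M_t$ then violates the inequality with constant $c$.

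There is no real obstacle here. The only point needing care is that the supremum defining $\delta_2(X)$ may not be attained a priori, which is why minimality is argued by contradiction with a strict inequality $\delta(X_S)>c$ rather than by picking a maximizing subspace. In finite dimensions the sup is in fact attained by compactness of the Grassmannian and continuity of $S\mapsto\delta(X_S)$, but this is not needed.
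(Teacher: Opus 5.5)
Your proof is correct and matches the paper's: admissibility of $c=\delta_2(X)$ comes from $\Phi(X)=\delta_2(X)$ in Theorem~\ref{thm:main}, and minimality comes from taking the supremum over admissible pairs to get $\delta_2(X)=\Phi(X)\leq c$. Your direct appeal to Theorem~\ref{thm:lifting}, which produces an explicit violating $M_t$ for any $c<\delta_2(X)$, is a valid quantitative supplement but is not needed.
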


\begin{proof}
	By Theorem \ref{thm:main},
	\[
	\sup
	\left\{
	\norm{
		\begin{pmatrix}
			A&X\\
			X^*&B
	\end{pmatrix}}
	-\norm{A+B}:
	\begin{pmatrix}
		A&X\\
		X^*&B
	\end{pmatrix}\geq0
	\right\}
	=
	\delta_2(X).
	\]
	Hence the stated inequality holds with \(c=\delta_2(X)\).
	Conversely, if it holds with some \(c\geq0\), then taking the
	supremum over all admissible pairs \(A,B\) gives
	\[
	\delta_2(X)=\Phi(X)\leq c.
	\]
	Thus \(\delta_2(X)\) is the smallest possible constant.
\end{proof}

The exact formula also reveals several invariances of the
positive-block defect that are not transparent from its definition.

\begin{corollary}\label{cor:invariances}
	Let \(X\in M_n(\mathbb C)\), \(n\geq2\), let \(U\in M_n(\mathbb C)\)
	be unitary, and let \(\alpha,\beta\in\mathbb C\).  Then
	\[
	\Phi(U^*XU)=\Phi(X),
	\qquad
	\Phi(X^*)=\Phi(X),
	\]
	and
	\begin{equation}\label{eq:affine}
		\Phi(\alpha X+\beta I)
		=
		|\alpha|\,\Phi(X).
	\end{equation}
	In particular,
	\[
	\Phi(X+\beta I)=\Phi(X).
	\]
\end{corollary}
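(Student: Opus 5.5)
By Theorem \ref{thm:main}, \(\Phi=\delta_2\). It therefore suffices to prove the three invariances for \(\delta_2\), working directly from the definition \(\delta_2(X)=\sup_{\dim S=2}\delta(X_S)\). The basic tool is that the numerical range of a compression transforms simply under each of the three operations. After that, \(\delta\) of a compact convex set changes in a transparent way under rotations, reflections, translations and dilations.

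\emph{Unitary invariance.} Fix a unitary \(U\) and a two-dimensional subspace \(S\), and put \(S'=US\), which is again two-dimensional. For a unit vector \(x\in S\) we have \(\ip{x}{U^*XUx}=\ip{Ux}{XUx}\), and \(Ux\) runs over the unit vectors of \(S'\). Hence \(W((U^*XU)_S)=W(X_{S'})\). Since \(S\mapsto US\) is a bijection on two-dimensional subspaces, the two suprema coincide, so \(\delta_2(U^*XU)=\delta_2(X)\).

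\emph{Adjoint invariance.} For \(x\in S\) we have \(\ip{x}{X^*x}=\overline{\ip{x}{Xx}}\), so \(W((X^*)_S)=\overline{W(X_S)}\). Complex conjugation is an isometry of \(\mathbb C\), so it preserves the diameter of the largest inscribed disk. Alternatively, take adjoints in the Schur form from Lemma \ref{lem:ellipse}: \((X_S)^*\) is lower triangular with off-diagonal entry \(\overline z\), and reversing the order of the basis gives an upper-triangular form with the same modulus \(|z|\).

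\emph{Affine behaviour.} We have \((\alpha X+\beta I)_S=\alpha X_S+\beta I_S\), so \(W((\alpha X+\beta I)_S)=\alpha W(X_S)+\beta\). The map \(w\mapsto\alpha w+\beta\) is a similarity of the plane with ratio \(|\alpha|\) when \(\alpha\neq0\). A similarity maps closed disks of diameter \(d\) bijectively onto closed disks of diameter \(|\alpha|d\), and it preserves inclusion. Hence \(\delta(\alpha W+\beta)=|\alpha|\delta(W)\). When \(\alpha=0\), the range is the single point \(\{\beta\}\), so \(\delta=0=|\alpha|\,\delta(W)\). Taking the supremum over \(S\) gives \(\delta_2(\alpha X+\beta I)=|\alpha|\delta_2(X)\), and the case \(\alpha=1\) gives the last claim.

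The only point needing care is the degenerate case \(\alpha=0\), together with the convention that \(\delta\) of a point or a segment is \(0\). No real obstacle is expected: once Theorem \ref{thm:main} is in hand, all the content reduces to elementary geometry of the numerical range. A direct proof at the level of \(\Phi\) would be harder for the adjoint and the \(\beta\)-shift, since neither is visibly compatible with positivity of the block matrix.
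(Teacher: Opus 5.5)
Your proof is correct and follows the paper's argument: reduce to \(\delta_2\) via Theorem~\ref{thm:main}, then track how \(W(X_S)\) transforms under unitary conjugation, the adjoint, and affine maps. Your explicit treatment of \(\alpha=0\) is a small improvement. One correction to your closing aside: adjoint invariance is immediate at the level of \(\Phi\), since conjugating by the block swap \(\begin{pmatrix}0&I\\I&0\end{pmatrix}\) turns \(\begin{pmatrix}A&X\\X^*&B\end{pmatrix}\) into \(\begin{pmatrix}B&X^*\\X&A\end{pmatrix}\) with the same norm and the same \(\norm{A+B}\); only the \(\beta\)-shift is genuinely opaque there, as the paper's remark notes.
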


\begin{proof}
	By Theorem \ref{thm:main}, it suffices to verify the corresponding
	identities for \(\delta_2\).
	
	For a unitary \(U\) and a two-dimensional subspace \(S\), the
	compression of \(U^*XU\) to \(S\) is unitarily equivalent to the
	compression of \(X\) to \(US\).  Hence
	\[
	\delta_2(U^*XU)=\delta_2(X).
	\]
	Moreover,
	\[
	(X^*)_S=(X_S)^*,
	\]
	and
	\[
	W((X_S)^*)=\overline{W(X_S)},
	\]
	so
	\[
	\delta_2(X^*)=\delta_2(X).
	\]
	
	Finally, for every two-dimensional subspace \(S\),
	\[
	(\alpha X+\beta I)_S
	=
	\alpha X_S+\beta I_S,
	\]
	and therefore
	\[
	W((\alpha X+\beta I)_S)
	=
	\alpha W(X_S)+\beta.
	\]
	Translation does not change the diameter of the largest inscribed
	disk, while multiplication by \(\alpha\) scales it by
	\(|\alpha|\).  Thus
	\[
	\delta((\alpha X+\beta I)_S)
	=
	|\alpha|\,\delta(X_S).
	\]
	Taking the supremum over all two-dimensional subspaces \(S\) gives
	\[
	\delta_2(\alpha X+\beta I)
	=
	|\alpha|\,\delta_2(X).
	\]
	The assertions now follow from
	\(\Phi=\delta_2\).
\end{proof}
	
\begin{remark}
	The translation invariance
	\[
	\Phi(X+\beta I)=\Phi(X)
	\]
	is not transparent from the extremal definition \eqref{eq:Phi}.
	It becomes immediate once one identifies
	\[
	\Phi(X)=\delta_2(X),
	\]
	since translating a numerical range does not change the diameter of
	its largest inscribed disk.
\end{remark}

\begin{remark}
	Theorem \ref{thm:main} has a local-to-global character.  The
	quantity \(\delta_2(X)\) is determined entirely by the
	two-dimensional compressions of \(X\), while \(\Phi(X)\) is defined
	through an optimization over positive block matrices of order
	\(2n\).  Theorem \ref{thm:lifting} supplies the bridge between these
	two descriptions by asymptotically realizing the contribution of
	each two-dimensional compression through a family of admissible
	positive block matrices.
\end{remark}
	
\section{The zero-width case and the Bourin--Lee conjecture}

We next identify the zero set of \(\delta_2\).  The following
finite-dimensional characterization is closely related to classical
results concerning normal two-dimensional compressions; compare
Farenick \cite{Farenick}.  We include a direct proof in the precise
form needed here.

\begin{proposition}\label{prop:zero}
	For \(X\in M_n(\mathbb C)\), \(n\geq2\), the following are
	equivalent:
	\begin{enumerate}[label=\textup{(\roman*)}]
		\item \(\delta_2(X)=0\);
		\item every two-dimensional compression of \(X\) is normal;
		\item \(X\) is essentially Hermitian.
	\end{enumerate}
\end{proposition}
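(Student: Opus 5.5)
The plan is to prove the cycle (iii)$\Rightarrow$(i)$\Rightarrow$(ii)$\Rightarrow$(iii). The implication (iii)$\Rightarrow$(i) was already observed in the introduction. If \(X=\alpha H+\beta I\), then every compression \(X_S=\alpha H_S+\beta I_S\) has a segment as numerical range, so \(\delta(X_S)=0\) for all \(S\).

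For (i)$\Rightarrow$(ii), fix a two-dimensional \(S\) and bring \(X_S\) to the Schur form of Lemma \ref{lem:ellipse}, with off-diagonal entry \(z\). Since \(\delta(X_S)\le\delta_2(X)=0\), Lemma \ref{lem:ellipse} gives \(z=0\). Hence \(X_S\) is unitarily diagonal, and in particular normal.

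The substance is (ii)$\Rightarrow$(iii). First I dispose of the case \(n=2\): a normal \(2\times2\) matrix has two eigenvalues \(\lambda_1,\lambda_2\) and orthogonal eigenvectors. Writing \(X=\lambda_1 P+\lambda_2(I-P)=\lambda_2 I+(\lambda_1-\lambda_2)P\) with \(P\) a Hermitian projection shows that \(X\) is essentially Hermitian.

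For \(n\ge3\), I would write \(X=H+iK\) with \(H,K\) Hermitian. A compression \(X_S=H_S+iK_S\) is normal iff \(H_S\) and \(K_S\) commute, and two commuting \(2\times2\) Hermitian matrices are simultaneously diagonalizable. I would then work with the following criterion: \(X\) is essentially Hermitian iff there is a real line through which \(X\) is Hermitian after rotation, i.e.\ iff \(H,K,I\) span (over \(\mathbb R\)) a space of dimension at most \(2\). Equivalently, there exist real \((a,b)\neq(0,0)\) and real \(c\) with \(aH+bK=cI\).

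To extract this from (ii), I take \(x,y\) orthonormal and \(S=\Span\{x,y\}\). Normality of \(X_S\) is equivalent to the vanishing of the commutator of \(H_S\) and \(K_S\). For \(2\times2\) Hermitian matrices this says that the traceless parts of \(H_S\) and \(K_S\) are real-proportional. Traceless \(2\times 2\) Hermitian matrices form a 3-dimensional real space \(\mathbb R^3\) (Pauli coordinates), so the condition is that the vectors \(h(S),k(S)\in\mathbb R^3\) are parallel.

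The main obstacle is globalizing this local proportionality to a single pair \((a,b)\) independent of \(S\). I would try the following. Replace \(X\) by \(e^{i\theta}(X-\mu I)\) (allowed by the invariances of Corollary \ref{cor:invariances}, which hold equally for normality of compressions) to reduce to the case where \(H\) is not a scalar multiple of \(I\). Then choose \(S\) containing two eigenvectors of \(H\) with distinct eigenvalues, so that \(h(S)\neq0\) and \(k(S)=\lambda h(S)\). The goal is to show that \(K-\lambda H\) is scalar. To do so I would apply (ii) to subspaces spanned by \(u\) and \(v\), where \(u,v\) range over eigenvectors of \(H\), and to perturbed subspaces \(\Span\{u, v+\epsilon w\}\). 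Expanding to first order in \(\epsilon\), I would show that all off-diagonal entries of \(K-\lambda H\) in an eigenbasis of \(H\) vanish and that its diagonal entries coincide.

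If this perturbative bookkeeping becomes messy, a fallback is the \(3\)-dimensional case. The idea is to use that \(W(X_T)\) for \(\dim T=3\) has every compression-ellipse degenerate, so \(W(X_T)\) has no interior points of any inscribed \(2\)-dimensional ellipse. This forces \(W(X_T)\) to be a segment, and since \(W(X)\) is the union of the sets \(W(X_T)\) over all \(T\), it forces \(W(X)\) to be a segment. A matrix whose numerical range is a segment is essentially Hermitian: after a rotation and translation \(W\subset\mathbb R\), so \(X\) is Hermitian. Indeed this last observation gives a cleaner route overall. By (i), every \(W(X_S)\) is a segment; if \(W(X)\) contained three non-collinear points \(\langle x_j,Xx_j\rangle\), I would seek a two-dimensional \(S\) whose numerical range is not a segment. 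Constructing such an \(S\), via connectedness along paths of unit vectors, is the step I expect to require the most care.
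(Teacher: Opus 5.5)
\textbf{There is a genuine gap.} Your implications (iii)$\Rightarrow$(i) and (i)$\Rightarrow$(ii) are correct, and so is the case $n=2$ of (ii)$\Rightarrow$(iii). For $n\ge 3$, however, (ii)$\Rightarrow$(iii) is not proved, and this is the whole content of the proposition. You identify globalizing the $S$-dependent proportionality constant as the obstacle, but then only describe what you would try, and the test family you describe cannot succeed.

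Its off-diagonal half is fine, and in fact exact rather than first-order. If $h_i=h_j\neq h_\ell$, then on $S=\Span\{e_i,(e_j+\epsilon e_\ell)/\sqrt{1+\epsilon^2}\}$ the compression $H_S$ is diagonal with distinct entries for $\epsilon\neq 0$. Hence $k_{ij}+\epsilon k_{i\ell}=0$, and $k_{i\ell}=0$ by the distinct-eigenvalue case; this is the paper's $u=(e_i+e_\ell)/\sqrt2$, $v=e_j$ trick with $\epsilon=1$. After this, $X=\diag(x_1,\dots,x_n)$ with $x_j=h_j+ik_j$, and what remains is to show that the points $x_j$ are collinear.

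Your subspaces cannot detect this. Any two-dimensional $S$ containing an eigenvector $e_i$ gives a diagonal, hence normal, compression: for a unit $f\in S$ orthogonal to $e_i$ one has $f^*Xe_i=x_if^*e_i=0$ and $e_i^*Xf=x_ie_i^*f=0$. Every $\Span\{u,v+\epsilon w\}$ with $u$ an eigenvector contains $u$. Concretely, $X=\diag(1,i,-1)$ passes every test on your list but is not essentially Hermitian.

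Expanding to first order does not help either. Around $\Span\{e_1,e_2\}$, take $f_a(\epsilon)=e_a+\epsilon g_a+O(\epsilon^2)$ with $g_a\perp e_1,e_2$. For diagonal $X$ this gives $f_a^*Xf_b=e_a^*Xe_b+O(\epsilon^2)$, so the first-order variation of $X_{S_\epsilon}$ vanishes identically.

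The missing ingredient is a subspace in which neither basis vector is an eigenvector and three eigenvectors are mixed. The paper takes $u=(e_i+e_j+e_\ell)/\sqrt3$ and $v=(e_i-e_j)/\sqrt2$. For this choice the $(1,2)$ entry of $[H_S,K_S]$ is a nonzero multiple of
\[
\det\begin{pmatrix}1&h_i&k_i\\1&h_j&k_j\\1&h_\ell&k_\ell\end{pmatrix}.
\]
This forces every triple of points $(h_j,k_j)$ to be collinear, and hence $K=aH+bI$.

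Your fallback is circular. The reduction from general $n$ to a three-dimensional $T$ is fine, as is the final step ($W(Y)\subset\mathbb R$ forces $Y=Y^*$). But the claim that a $3\times 3$ matrix all of whose two-dimensional compressions have segment numerical ranges must itself have a segment numerical range is precisely the case $n=3$ of (ii)$\Rightarrow$(iii), and you give no argument for it. Likewise, the ``cleaner route'' is just the contrapositive of the statement to be proved: it asks for a two-dimensional $S$, built from three non-collinear points of $W(X)$, with $W(X_S)$ not a segment. Connectedness of the unit sphere supplies no mechanism for constructing such an $S$. Some explicit algebraic computation like the determinant identity above is still needed.
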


\begin{proof}
	By the definition of \(\delta_2\),
	\[
	\delta_2(X)=0
	\]
	if and only if
	\[
	\delta(X_S)=0
	\]
	for every two-dimensional subspace \(S\subset\mathbb C^n\).
	By Lemma \ref{lem:ellipse}, a \(2\times2\) matrix has zero
	minor-axis length if and only if it is normal.  This proves the
	equivalence of \textup{(i)} and \textup{(ii)}.
	
	We next prove that \textup{(ii)} implies \textup{(iii)}.  Write
	\[
	X=H+iK,
	\qquad
	H=\frac{X+X^*}{2},
	\qquad
	K=\frac{X-X^*}{2i},
	\]
	where \(H\) and \(K\) are Hermitian.
	
	For every two-dimensional subspace \(S\),
	\[
	X_S=H_S+iK_S.
	\]
	Since \(X_S\) is normal,
	\[
	0=X_S^*X_S-X_SX_S^*
	=2i[H_S,K_S],
	\]
	and therefore
	\begin{equation}\label{eq:commute}
		[H_S,K_S]=0
		\qquad
		\text{for every }S\subset\mathbb C^n,\ \dim S=2.
	\end{equation}
	
	If \(H=cI\) is scalar, then
	\[
	X=cI+iK,
	\]
	so \(X\) is essentially Hermitian.  We may therefore assume that
	\(H\) is not scalar.
	
	Choose an orthonormal eigenbasis \(e_1,\ldots,e_n\) for \(H\), with
	\[
	He_j=h_je_j,
	\qquad
	h_j\in\mathbb R,
	\]
	and write
	\[
	k_{ij}=e_i^*Ke_j.
	\]
	
	Suppose first that \(h_i\neq h_j\).  For
	\[
	S=\Span\{e_i,e_j\},
	\]
	we have
	\[
	H_S=
	\begin{pmatrix}
		h_i&0\\
		0&h_j
	\end{pmatrix}.
	\]
	Since \(H_S\) commutes with \(K_S\) and \(h_i\neq h_j\), the
	off-diagonal entries of \(K_S\) vanish.  Hence
	\begin{equation}\label{eq:crosszero}
		k_{ij}=0.
	\end{equation}
	
	Now let \(i\neq j\) satisfy
	\[
	h_i=h_j=\lambda.
	\]
	Since \(H\) is not scalar, there exists an index \(\ell\) such that
	\[
	h_\ell=\mu\neq\lambda.
	\]
	Set
	\[
	u=\frac{e_i+e_\ell}{\sqrt2},
	\qquad
	v=e_j.
	\]
	Then \(u,v\) are orthonormal, and the compression of \(H\) to
	\[
	S=\Span\{u,v\}
	\]
	is
	\[
	H_S=
	\begin{pmatrix}
		(\lambda+\mu)/2&0\\
		0&\lambda
	\end{pmatrix}.
	\]
	Its diagonal entries are distinct.  Since
	\([H_S,K_S]=0\), it follows that the off-diagonal entries of
	\(K_S\) vanish, and hence
	\[
	u^*Kv=0.
	\]
	On the other hand,
	\[
	u^*Kv
	=
	\frac{1}{\sqrt2}
	\left(k_{ij}+k_{\ell j}\right).
	\]
	Since \(h_\ell\neq h_j\), equation \eqref{eq:crosszero} gives
	\(k_{\ell j}=0\).  Thus
	\[
	k_{ij}=0.
	\]
	
	We have therefore shown that \(K\) is diagonal in the same basis as
	\(H\).  Write
	\[
	H=\diag(h_1,\ldots,h_n),
	\qquad
	K=\diag(k_1,\ldots,k_n),
	\]
	where \(k_j=k_{jj}\in\mathbb R\).
	
	If \(n=2\), then \(H\) is non-scalar, so \(h_1\neq h_2\).  Hence
	there exist real numbers \(a,b\) such that
	\[
	k_j=ah_j+b,
	\qquad j=1,2.
	\]
	Therefore
	\[
	K=aH+bI.
	\]
	
	Assume now that \(n\geq3\).  Let \(i,j,\ell\) be three distinct
	indices and set
	\[
	u=\frac{e_i+e_j+e_\ell}{\sqrt3},
	\qquad
	v=\frac{e_i-e_j}{\sqrt2}.
	\]
	Then \(u,v\) are orthonormal.  Relative to this basis, the
	compressions of \(H\) and \(K\) are
	\[
	H_S=
	\begin{pmatrix}
		\dfrac{h_i+h_j+h_\ell}{3}
		&
		\dfrac{h_i-h_j}{\sqrt6}
		\\[6pt]
		\dfrac{h_i-h_j}{\sqrt6}
		&
		\dfrac{h_i+h_j}{2}
	\end{pmatrix}
	\]
	and
	\[
	K_S=
	\begin{pmatrix}
		\dfrac{k_i+k_j+k_\ell}{3}
		&
		\dfrac{k_i-k_j}{\sqrt6}
		\\[6pt]
		\dfrac{k_i-k_j}{\sqrt6}
		&
		\dfrac{k_i+k_j}{2}
	\end{pmatrix}.
	\]
	By \eqref{eq:commute}, these matrices commute.  Equating the
	\((1,2)\)-entry of their commutator to zero gives
	\[
	h_i k_j-h_i k_\ell-h_j k_i+h_j k_\ell
	+h_\ell k_i-h_\ell k_j=0.
	\]
	Equivalently,
	\begin{equation}\label{eq:detzero}
		\det
		\begin{pmatrix}
			1&h_i&k_i\\
			1&h_j&k_j\\
			1&h_\ell&k_\ell
		\end{pmatrix}
		=0.
	\end{equation}
	Thus every three of the points
	\[
	(h_j,k_j)\in\mathbb R^2
	\]
	are collinear.
	
	Since \(H\) is not scalar, choose \(p,q\) such that
	\[
	h_p\neq h_q.
	\]
	Define
	\[
	a=\frac{k_p-k_q}{h_p-h_q},
	\qquad
	b=k_p-ah_p.
	\]
	For any \(r\notin\{p,q\}\), applying \eqref{eq:detzero} to the
	triple \(p,q,r\) gives
	\[
	k_r=ah_r+b.
	\]
	The same identity is already true for \(r=p,q\).  Hence
	\[
	K=aH+bI.
	\]
	Therefore
	\[
	X=H+iK=(1+ia)H+ibI,
	\]
	so \(X\) is essentially Hermitian.
	
	Finally, suppose that \textup{(iii)} holds.  Thus
	\[
	X=\alpha H+\beta I,
	\qquad
	H=H^*.
	\]
	For every two-dimensional subspace \(S\),
	\[
	X_S=\alpha H_S+\beta I_S.
	\]
	Since \(H_S\) is Hermitian, \(X_S\) is normal.  Hence
	\textup{(iii)} implies \textup{(ii)}, completing the proof.
\end{proof}
We now obtain the announced resolution of the conjecture.

\begin{theorem}[Resolution of the Bourin--Lee conjecture]
	\label{thm:conjecture}
	Let \(X\in M_n(\mathbb C)\), \(n\geq2\).  The following conditions
	are equivalent:
	\begin{enumerate}[label=\textup{(\roman*)}]
		\item
		\[
		\norm{
			\begin{pmatrix}
				A&X\\
				X^*&B
		\end{pmatrix}}
		\leq
		\norm{A+B}
		\]
		for every \(A,B\in M_n(\mathbb C)\) such that
		\[
		\begin{pmatrix}
			A&X\\
			X^*&B
		\end{pmatrix}\geq0;
		\]
		
		\item \(\Phi(X)=0\);
		
		\item \(\delta_2(X)=0\);
		
		\item \(X\) is essentially Hermitian.
	\end{enumerate}
	In particular, Conjecture 3.3 of Bourin and Lee
	\cite{BourinLeeInradius} is true.
\end{theorem}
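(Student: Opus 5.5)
The theorem follows by chaining results already established, so the proof organizes the equivalences as (i)\(\Leftrightarrow\)(ii), (ii)\(\Leftrightarrow\)(iii), and (iii)\(\Leftrightarrow\)(iv).

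For (i)\(\Leftrightarrow\)(ii), we work directly from the definition \eqref{eq:Phi}. Condition (i) states that every quantity inside the supremum defining \(\Phi(X)\) is \(\leq 0\), so (i) is equivalent to \(\Phi(X)\leq 0\). It remains to note that \(\Phi(X)\geq0\) always. Instead of exhibiting a particular admissible matrix with zero defect, we use Theorem \ref{thm:main}: it gives \(\Phi(X)=\delta_2(X)\), and \(\delta_2(X)\geq0\) because it is a supremum of diameters. Hence (i) holds if and only if \(\Phi(X)=0\). Alternatively, Corollary \ref{cor:optimality} says \(\delta_2(X)\) is the smallest admissible constant \(c\geq0\), and (i) says \(c=0\) is admissible.

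For (ii)\(\Leftrightarrow\)(iii), we invoke Theorem \ref{thm:main} directly.

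For (iii)\(\Leftrightarrow\)(iv), we invoke Proposition \ref{prop:zero}, using the equivalence of its items (i) and (iii).

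The final sentence of the statement then follows. The hypothesis of Conjecture 3.3 is exactly condition (i), and its conclusion is exactly condition (iv).

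The only step needing any care is the sign issue in (i)\(\Rightarrow\)(ii), namely ensuring \(\Phi(X)\) cannot be negative. This is settled by \(\Phi=\delta_2\geq0\), so there is no genuine obstacle; all the substantive work lies in Theorem \ref{thm:lifting} and Proposition \ref{prop:zero}.
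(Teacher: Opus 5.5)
Your proposal is correct and follows the paper's proof essentially step for step: (ii)$\Leftrightarrow$(iii) from Theorem~\ref{thm:main}, (iii)$\Leftrightarrow$(iv) from Proposition~\ref{prop:zero}, and (i)$\Leftrightarrow$(ii) by reading (i) as $\Phi(X)\le 0$ and using $\Phi(X)=\delta_2(X)\ge 0$ to exclude a negative value. Your alternative via Corollary~\ref{cor:optimality} is also valid; it rests on the same identity $\Phi=\delta_2$.
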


\begin{proof}
	By Theorem \ref{thm:main},
	\[
	\Phi(X)=\delta_2(X),
	\]
	so \textup{(ii)} and \textup{(iii)} are equivalent.  Proposition
	\ref{prop:zero} gives the equivalence of \textup{(iii)} and
	\textup{(iv)}.
	
	It remains to relate \textup{(i)} and \textup{(ii)}.  If
	\textup{(i)} holds, then every quantity appearing in the supremum
	defining \(\Phi(X)\) is nonpositive.  Hence
	\[
	\Phi(X)\leq0.
	\]
	On the other hand, Theorem \ref{thm:main} gives
	\[
	\Phi(X)=\delta_2(X)\geq0.
	\]
	Therefore
	\[
	\Phi(X)=0,
	\]
	and \textup{(ii)} follows.
	
	Conversely, suppose that \(\Phi(X)=0\).  For every admissible pair
	\(A,B\),
	\[
	\norm{
		\begin{pmatrix}
			A&X\\
			X^*&B
	\end{pmatrix}}
	-\norm{A+B}
	\leq
	\Phi(X)=0.
	\]
	This is exactly \textup{(i)}, and the proof is complete.
\end{proof}

\begin{remark}
	Theorem \ref{thm:conjecture} shows that the Bourin--Lee conjecture
	is precisely the zero-defect case of the stronger identity
	\[
	\Phi(X)=\delta_2(X).
	\]
	Thus \(\delta_2(X)\) gives the exact maximal excess of
	\[
	\norm{
		\begin{pmatrix}
			A&X\\
			X^*&B
	\end{pmatrix}}
	\]
	over \(\norm{A+B}\), among all positive block matrices with
	prescribed off-diagonal block \(X\).  In particular, this excess
	vanishes identically exactly when \(X\) is essentially Hermitian.
\end{remark}
	
\section{Normal off-diagonal blocks and sharpness}

We finally address the sharpness problem posed in
\cite[Question 2.5]{BourinLeeInradius}. We first construct an explicit
normal example proving sharpness at the leading eigenvalue level
\(j=0\). We then show that a direct-sum amplification of the same
construction yields sharpness at every prescribed eigenvalue level
\(j\geq0\).
	
	Let
	\[
	\zeta=e^{2\pi i/3},
	\qquad
	U=\diag(1,\zeta,\zeta^2),
	\]
	and define
	\[
	q_0=\frac1{\sqrt3}
	\begin{pmatrix}
		1\\1\\1
	\end{pmatrix},
	\qquad
	q_1=Uq_0,
	\qquad
	q_2=U^2q_0.
	\]
	Since
	\[
	1+\zeta+\zeta^2=0,
	\]
	the vectors \(q_0,q_1,q_2\) form an orthonormal basis of
	\(\mathbb C^3\).  Moreover,
	\begin{equation}\label{eq:cycle}
		Uq_0=q_1,
		\qquad
		Uq_1=q_2,
		\qquad
		Uq_2=q_0.
	\end{equation}
	
	\begin{proposition}\label{prop:explicit}
		Let
		\[
		D=q_1q_1^*-q_0q_0^*.
		\]
		For \(t>0\), set
		\[
		s_t=\sqrt{t^2+4},
		\qquad
		k_t=t+s_t,
		\]
		and define
		\[
		A_t=\frac{k_t}{2}I_3+tD,
		\qquad
		B_t=\frac{k_t}{2}I_3-tD.
		\]
		Then \(A_t,B_t\geq0\) and
		\[
		M_t=
		\begin{pmatrix}
			A_t&U\\
			U^*&B_t
		\end{pmatrix}\geq0.
		\]
		Moreover,
		\begin{equation}\label{eq:defect-explicit}
			\norm{M_t}-\norm{A_t+B_t}
			=
			1-\frac{2}{t+\sqrt{t^2+4}}.
		\end{equation}
		In particular,
		\[
		\norm{M_t}-\norm{A_t+B_t}
		\longrightarrow1
		\qquad (t\to\infty).
		\]
	\end{proposition}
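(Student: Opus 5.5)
The plan is to repeat the mechanism of Theorem~\ref{thm:lifting} and compute everything exactly. Write
\[
Q_t=\begin{pmatrix} tD&U\\ U^*&-tD\end{pmatrix},
\qquad\text{so that}\qquad
M_t=\tfrac{k_t}{2}I_6+Q_t,
\qquad
A_t+B_t=k_tI_3 .
\]
Positivity of $A_t,B_t$ is immediate. Indeed, $D$ has spectrum $\{-1,1,0\}$, and
\[
\frac{k_t}{2}=\frac{t+s_t}{2}\geq t .
\]
Everything else reduces to finding the full spectrum of $Q_t$. This gives three targets:
\[
\lambda_{\min}(Q_t)=-\frac{k_t}{2},\qquad
M_t\geq0 \text{ with smallest eigenvalue }0,\qquad
\norm{M_t}-\norm{A_t+B_t}=\lambda_{\max}(Q_t)-\frac{k_t}{2}.
\]
The last identity is the analogue of \eqref{eq:defect-sum}.

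The key step is a decoupling of $Q_t$ into $2\times2$ blocks. I will expand $x=\sum x_iq_i$ and $y=\sum y_iq_i$ in the orthonormal basis $q_0,q_1,q_2$, with indices taken mod $3$. In this basis:
\begin{itemize}
\item $D=\diag(-1,1,0)$;
\item by \eqref{eq:cycle}, $U$ is the cyclic shift, so $(Uy)_i=y_{i-1}$ and $(U^*x)_{i-1}=x_i$.
\end{itemize}
Hence $Q_t$ leaves each pair of coordinates $(x_i,y_{i-1})$ invariant. On that pair it acts by the matrix
\[
\begin{pmatrix} t d_i&1\\ 1&-t d_{i-1}\end{pmatrix},
\qquad (d_0,d_1,d_2)=(-1,1,0).
\]
The three blocks are as follows.
\begin{itemize}
\item The pair $(x_1,y_0)$ gives $\begin{pmatrix} t&1\\1&t\end{pmatrix}$, with eigenvalues $t\pm1$.
\item The pair $(x_0,y_2)$ gives $\begin{pmatrix} -t&1\\1&0\end{pmatrix}$.
\item The pair $(x_2,y_1)$ gives $\begin{pmatrix} 0&1\\1&-t\end{pmatrix}$.
\end{itemize}
The last two blocks both have eigenvalues $(-t\pm s_t)/2$. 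Therefore
\[
\lambda_{\max}(Q_t)=t+1,
\qquad
\lambda_{\min}(Q_t)=-\frac{t+s_t}{2}=-\frac{k_t}{2}.
\]
Here I use $s_t-t<2<2t+2$ for the first identity, so that $t+1$ beats $(s_t-t)/2$, and $s_t>2$ for the second, so that $-(t+s_t)/2$ lies below $t-1$. I expect this identification of the invariant pairs to be the only real obstacle. It is where the choice of $D$ supported on $q_0,q_1$ interacts with the cyclic action of $U$. In particular, the pair $(x_1,y_0)$ sees $+t$ on both diagonal entries, which plays the role of $E_+$ in Theorem~\ref{thm:lifting}. The remaining pairs see one zero on the diagonal, which plays the role of the vanishing Schur entry.

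With the spectrum in hand, the conclusions follow directly. First, $M_t=\frac{k_t}{2}I+Q_t$ has smallest eigenvalue $0$, so $M_t\geq0$ and $\norm{M_t}=\frac{k_t}{2}+t+1$. Second, $\norm{A_t+B_t}=k_t$, so the defect equals
\[
t+1-\frac{t+s_t}{2}=1-\frac{s_t-t}{2}.
\]
Third, since $(s_t-t)(s_t+t)=4$, we have $\frac{s_t-t}{2}=\frac{2}{t+s_t}$, which is exactly \eqref{eq:defect-explicit}. Finally, the limit $1$ as $t\to\infty$ is immediate from this formula.
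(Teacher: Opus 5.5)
Your proof is correct and is essentially the paper's argument. Your invariant coordinate pairs $(x_1,y_0)$, $(x_2,y_1)$, $(x_0,y_2)$ are exactly the paper's three $Q_t$-invariant subspaces $\Span\{\binom{q_1}{0},\binom{0}{q_0}\}$, $\Span\{\binom{q_2}{0},\binom{0}{q_1}\}$, $\Span\{\binom{q_0}{0},\binom{0}{q_2}\}$, with the same $2\times2$ restrictions and the same final computation; you additionally spell out the eigenvalue comparisons identifying $\lambda_{\max}(Q_t)$ and $\lambda_{\min}(Q_t)$, which the paper leaves implicit.
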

	
	\begin{proof}
		Since
		\[
		Dq_1=q_1,\qquad
		Dq_0=-q_0,\qquad
		Dq_2=0,
		\]
		the spectrum of \(D\) is \(\{-1,0,1\}\).  Since
		\[
		\frac{k_t}{2}
		=
		\frac{t+\sqrt{t^2+4}}{2}
		>t,
		\]
		it follows immediately that
		\[
		A_t\geq0,
		\qquad
		B_t\geq0.
		\]
		
		Write
		\[
		M_t=\frac{k_t}{2}I_6+Q_t,
		\qquad
		Q_t=
		\begin{pmatrix}
			tD&U\\
			U^*&-tD
		\end{pmatrix}.
		\]
		Using \eqref{eq:cycle}, together with
		\[
		U^*q_0=q_2,\qquad
		U^*q_1=q_0,\qquad
		U^*q_2=q_1,
		\]
		we see that \(\mathbb C^3\oplus\mathbb C^3\) is the orthogonal
		direct sum of the following three \(Q_t\)-invariant subspaces:
		\[
		\Span\left\{
		\binom{q_1}{0},\binom{0}{q_0}
		\right\},
		\qquad
		\Span\left\{
		\binom{q_2}{0},\binom{0}{q_1}
		\right\},
		\qquad
		\Span\left\{
		\binom{q_0}{0},\binom{0}{q_2}
		\right\}.
		\]
		Relative to the displayed ordered bases, the corresponding
		restrictions of \(Q_t\) are
		\[
		\begin{pmatrix}
			t&1\\
			1&t
		\end{pmatrix},
		\qquad
		\begin{pmatrix}
			0&1\\
			1&-t
		\end{pmatrix},
		\qquad
		\begin{pmatrix}
			-t&1\\
			1&0
		\end{pmatrix}.
		\]
		The first matrix has eigenvalues
		\[
		t+1,\qquad t-1,
		\]
		while each of the other two has eigenvalues
		\[
		\frac{-t\pm\sqrt{t^2+4}}{2}.
		\]
		Consequently,
		\[
		\lambda_{\max}(Q_t)=t+1
		\]
		and
		\[
		\lambda_{\min}(Q_t)
		=
		-\frac{t+\sqrt{t^2+4}}{2}
		=
		-\frac{k_t}{2}.
		\]
		Therefore
		\[
		\lambda_{\min}(M_t)
		=
		\frac{k_t}{2}+\lambda_{\min}(Q_t)
		=
		0,
		\]
		and hence
		\[
		M_t\geq0.
		\]
		
		Furthermore,
		\[
		A_t+B_t=k_tI_3,
		\]
		so
		\[
		\norm{A_t+B_t}=k_t.
		\]
		Since \(M_t\geq0\),
		\[
		\norm{M_t}
		=
		\lambda_{\max}(M_t)
		=
		\frac{k_t}{2}+t+1.
		\]
		Thus
		\begin{align*}
			\norm{M_t}-\norm{A_t+B_t}
			&=
			t+1-\frac{k_t}{2}\\
			&=
			1-\frac{\sqrt{t^2+4}-t}{2}\\
			&=
			1-\frac{2}{t+\sqrt{t^2+4}},
		\end{align*}
		which proves \eqref{eq:defect-explicit}.
	\end{proof}
	
	The preceding \(3\times3\) construction shows that the Bourin--Lee
	radius correction is sharp at the leading eigenvalue level \(j=0\).
	A natural question is whether this phenomenon is confined to the
	largest eigenvalue or persists throughout the full family of
	eigenvalue inequalities.  The key observation is that the basic
	extremal example can be amplified by direct sums: this preserves
	positivity and normality, keeps the partial trace scalar,
	and increases the multiplicity of the top eigenvalue to any prescribed
	level.  This allows the same asymptotic extremality to be transferred
	from \(j=0\) to every \(j\geq0\).
	
\begin{theorem}[Sharpness of the normal radius constant at every eigenvalue level]
	\label{thm:radius}
	Let \(j\geq0\) be an integer. For every \(r>0\) and every
	\(\varepsilon>0\), there exist an integer \(n\), a normal matrix
	\(N\in M_n(\mathbb C)\), and matrices \(A,B\in M_n(\mathbb C)\) such
	that
	\[
	\spec(N)\subset\{z\in\mathbb C:|z|\leq r\},
	\]
	\[
	\begin{pmatrix}
		A&N\\
		N^*&B
	\end{pmatrix}\geq0,
	\]
	and
	\begin{equation}\label{eq:radius-sharp}
		\lambda_{1+2j}
		\begin{pmatrix}
			A&N\\
			N^*&B
		\end{pmatrix}
		>
		\lambda_{1+j}(A+B)+r-\varepsilon.
	\end{equation}
	Consequently, the constant \(r\) in the Bourin--Lee eigenvalue
	estimate is best possible at every prescribed eigenvalue level
	\(j\geq0\). In particular, this gives an affirmative answer to
	\cite[Question 2.5]{BourinLeeInradius}.
\end{theorem}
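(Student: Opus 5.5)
We will obtain the statement from Proposition \ref{prop:explicit} by scaling and direct-sum amplification. First we reduce to \(r=1\). If \(N,A,B\) satisfy the claim for \(r=1\) and tolerance \(\varepsilon/r\), then \(rN,rA,rB\) satisfy it for \(r\) and \(\varepsilon\). Indeed, positivity is preserved, \(\spec(rN)\) lies in the disk of radius \(r\), and every eigenvalue scales by \(r\). So we fix \(r=1\).

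Take \(U=\diag(1,\zeta,\zeta^2)\) as in Proposition \ref{prop:explicit}. It is unitary, hence normal, and its spectrum lies on the unit circle. For \(t>0\), let \(A_t,B_t,M_t\) be as in that proposition, and set \(n=3(j+1)\),
\[
N=\bigoplus_{i=1}^{j+1}U,\qquad A=\bigoplus_{i=1}^{j+1}A_t,\qquad B=\bigoplus_{i=1}^{j+1}B_t .
\]
Then \(N\) is normal with \(\spec(N)\subset\{|z|\le1\}\). After a permutation of coordinates (a unitary similarity), the block matrix \(\begin{pmatrix}A&N\\N^*&B\end{pmatrix}\) is the direct sum of \(j+1\) copies of \(M_t\). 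It is therefore positive semidefinite, and its spectrum is that of \(M_t\), each eigenvalue repeated \(j+1\) times.

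Next we read off the eigenvalues. From the proof of Proposition \ref{prop:explicit}, \(\lambda_{\max}(M_t)=k_t/2+t+1\), and the eigenvalues of \(M_t\) in decreasing order are
\[
\tfrac{k_t}{2}+t+1,\quad \tfrac{k_t}{2}+t-1,\quad \tfrac{k_t}{2}+\tfrac{-t+s_t}{2}\ (\text{twice}),\quad 0\ (\text{twice}).
\]
So the amplified block matrix has top eigenvalue \(k_t/2+t+1\) with multiplicity \(j+1\) and second eigenvalue \(k_t/2+t-1\) with multiplicity \(j+1\). The top \(2j+2\) eigenvalues therefore all lie in \(\{k_t/2+t+1,\,k_t/2+t-1\}\), so
\[
\lambda_{1+2j}\ge \tfrac{k_t}{2}+t-1 .
\]
On the other hand, \(A+B=k_tI_n\), so \(\lambda_{1+j}(A+B)=k_t\). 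The defect is then at least
\[
t-1-\tfrac{k_t}{2}=\tfrac{t-s_t}{2}-1,
\]
which tends to \(-1\). This is far from \(r-\varepsilon\), so the naive amplification fails.

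The main obstacle is this step: the direct sum gives only \(j+1\) copies of the top eigenvalue, whereas \(\lambda_{1+2j}\) needs about \(2j+1\) of them. To overcome this, we will instead amplify with \(2j+1\) copies, so that \(\lambda_{1+2j}\) still equals the top eigenvalue \(k_t/2+t+1\). Since \(A+B=k_tI\) is scalar, its eigenvalue \(\lambda_{1+j}(A+B)=k_t\) does not depend on the number of copies. Therefore
\[
\lambda_{1+2j}-\lambda_{1+j}(A+B)=1-\frac{2}{t+\sqrt{t^2+4}},
\]
by \eqref{eq:defect-explicit}. Choosing \(t\) large enough that \(2/(t+\sqrt{t^2+4})<\varepsilon\) gives \eqref{eq:radius-sharp}. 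The last sentence of the theorem then follows by comparison with the Bourin--Lee estimate.
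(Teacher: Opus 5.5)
Your final argument, which scales by \(r\) and takes \(2j+1\) copies of the \(3\times3\) example from Proposition \ref{prop:explicit} so that \(\lambda_{1+2j}\) equals the top eigenvalue \(k_t/2+t+1\) while \(A+B=k_tI\) stays scalar, is correct and is the paper's proof. The paper sets \(q=2j+1\), \(n=3q\) and scales by \(r\) at the end rather than first; your \(j+1\)-copy detour is harmless but unnecessary and can simply be deleted.
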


\begin{proof}
	Fix \(j\geq0\) and set
	\[
	q=2j+1,
	\qquad
	n=3q.
	\]
	Let \(U\), \(A_t\), \(B_t\), and \(M_t\) be as in Proposition
	\ref{prop:explicit}. Define
	\[
	\widetilde A_t=A_t^{\oplus q},
	\qquad
	\widetilde B_t=B_t^{\oplus q},
	\qquad
	\widetilde N=U^{\oplus q},
	\]
	and set
	\[
	A=r\widetilde A_t,
	\qquad
	B=r\widetilde B_t,
	\qquad
	N=r\widetilde N.
	\]
	Since \(U\) is unitary, \(N\) is normal and
	\[
	\spec(N)
	=
	\{r,r\zeta,r\zeta^2\}
	\subset
	\{z\in\mathbb C:|z|\leq r\}.
	\]
	
	Moreover,
	\[
	\begin{pmatrix}
		A&N\\
		N^*&B
	\end{pmatrix}
	=
	r
	\begin{pmatrix}
		\widetilde A_t&\widetilde N\\
		\widetilde N^*&\widetilde B_t
	\end{pmatrix}.
	\]
	After a simultaneous permutation of rows and columns, the matrix on
	the right is the direct sum of \(q\) copies of \(M_t\). Hence
	\[
	\begin{pmatrix}
		A&N\\
		N^*&B
	\end{pmatrix}
	\geq0.
	\]
	
	By Proposition \ref{prop:explicit},
	\[
	A_t+B_t=k_tI_3,
	\]
	and therefore
	\[
	A+B
	=
	rk_tI_n.
	\]
	Consequently,
	\begin{equation}\label{eq:partial-trace-eigenvalue}
		\lambda_{1+j}(A+B)=rk_t.
	\end{equation}
	
	On the other hand, the largest eigenvalue of each copy of \(rM_t\)
	is
	\[
	r\lambda_1(M_t),
	\]
	so the largest eigenvalue of the direct sum occurs with multiplicity
	at least \(q\). Since
	\[
	q=2j+1,
	\]
	it follows that
	\begin{equation}\label{eq:amplified-eigenvalue}
		\lambda_{1+2j}
		\begin{pmatrix}
			A&N\\
			N^*&B
		\end{pmatrix}
		=
		r\lambda_1(M_t).
	\end{equation}
	Combining \eqref{eq:partial-trace-eigenvalue},
	\eqref{eq:amplified-eigenvalue}, and Proposition
	\ref{prop:explicit}, we obtain
	\begin{align*}
		\lambda_{1+2j}
		\begin{pmatrix}
			A&N\\
			N^*&B
		\end{pmatrix}
		-\lambda_{1+j}(A+B)
		&=
		r\bigl(\lambda_1(M_t)-k_t\bigr)\\
		&=
		r\left(
		1-\frac{2}{t+\sqrt{t^2+4}}
		\right).
	\end{align*}
	The right-hand side tends to \(r\) as \(t\to\infty\). Hence, for
	sufficiently large \(t\),
	\[
	\lambda_{1+2j}
	\begin{pmatrix}
		A&N\\
		N^*&B
	\end{pmatrix}
	-\lambda_{1+j}(A+B)
	>
	r-\varepsilon,
	\]
	which is precisely \eqref{eq:radius-sharp}.
\end{proof}

\begin{remark}
	The sharpness phenomenon already occurs for a \(3\times3\) normal
	off-diagonal block.  This is the smallest possible dimension for
	genuinely two-dimensional numerical-range geometry of a normal
	matrix: the numerical range of a normal \(2\times2\) matrix is the
	convex hull of at most two eigenvalues and hence is a line segment,
	whereas the matrix \(U\) above has three noncollinear eigenvalues.
\end{remark}
	
\section{Concluding remarks}

The identity
\[
\Phi(X)=\delta_2(X)
\]
gives an exact extremal interpretation of the elliptical width.
Although \(\delta_2(X)\) is defined entirely through the
two-dimensional compressions of \(X\), the quantity \(\Phi(X)\) is
defined by a global optimization over positive block matrices of order
\(2n\). The compression-lifting theorem shows that these two
apparently different descriptions coincide exactly.

The proof also reveals the mechanism behind this identity. For a
two-dimensional Schur compression
\[
X_S=
\begin{pmatrix}
	\alpha&z\\
	0&\beta
\end{pmatrix},
\]
the quantity
\[
|z|=\delta(X_S)
\]
appears at the upper spectral edge of a suitable block perturbation,
whereas the vanishing reverse Schur entry eliminates the corresponding
leading-order contribution at the lower spectral edge. After shifting
by the lower spectral edge to restore positivity, the difference
between the two extreme spectral contributions becomes precisely the
positive-block norm defect. The estimates in Theorem
\ref{thm:lifting} quantify this construction with an error of order
\(t^{-1}\).

The Bourin--Lee conjecture is therefore the zero-defect case of the
same extremal principle:
\[
X\ \text{is essentially Hermitian}
\quad\Longleftrightarrow\quad
\delta_2(X)=0
\quad\Longleftrightarrow\quad
\Phi(X)=0.
\]
Thus the qualitative rigidity conjectured in
\cite{BourinLeeInradius} is governed by an exact quantitative
invariant.

The normal sharpness problem exhibits a complementary phenomenon.
The explicit \(3\times3\) construction in Section 5 shows that the
radius correction in the Bourin--Lee estimate is asymptotically sharp
at the leading eigenvalue level. Direct-sum amplification transfers the
same extremal behavior to every prescribed level \(j\geq0\). Hence the
radius constant in the normal-off-diagonal eigenvalue inequality is
best possible at every eigenvalue level.

A different extremal problem remains when the off-diagonal block
\(X\in M_n(\mathbb C)\) is fixed. For \(j=1,\ldots,n-1\), define
\[
\Phi_j(X)
=
\sup
\left\{
\lambda_{1+2j}
\begin{pmatrix}
	A&X\\
	X^*&B
\end{pmatrix}
-\lambda_{1+j}(A+B):
\begin{pmatrix}
	A&X\\
	X^*&B
\end{pmatrix}\geq0
\right\}.
\]
The Bourin--Lee inequality implies
\[
\Phi_j(X)\leq\delta_2(X).
\]
It remains to determine whether equality holds for every fixed
\(X\), or whether the optimal higher-eigenvalue defect is governed by
a different invariant involving higher-rank information or several
compression directions simultaneously. The lifting method developed
here is intrinsically based on a two-dimensional Schur compression
and on the interaction of the two extreme spectral edges, suggesting
that the fixed-\(X\) higher-eigenvalue problem may require a genuinely
higher-rank analogue of the present construction.

\section*{Declarations}

\noindent\textbf{Conflict of interest.}
The author declares that there is no conflict of interest.

\medskip

\noindent\textbf{Funding.}
The author received no specific funding for this work.

\medskip

\noindent\textbf{Data availability.}
No datasets were generated or analyzed during the current study.

\medskip

\noindent\textbf{Author contributions.}
The author was responsible for the conception of the work, the development
of the mathematical results, the proofs, and the preparation of the
manuscript.

\end{document}